\documentclass[10pt]{amsart}

\usepackage[utf8]{inputenc}
\usepackage{amsmath}
\usepackage{amssymb}
\usepackage{amsfonts}
\usepackage{amsthm}
\usepackage{thmtools}
\usepackage{enumerate}
\usepackage[top=3cm, bottom=3cm, left=3cm, right=3cm]{geometry}
\usepackage{graphicx}
\usepackage[all]{xy}
\usepackage{titlesec}
\usepackage{hyperref}
\usepackage{rotating}
\usepackage{xcolor}
\usepackage{setspace}
\usepackage[T1]{fontenc}
\usepackage{tikz}
\usepackage{comment}

\titleformat{\section}
{\filcenter\large\bf} 
{\thesection.\ } 
{1pt} 
{} %

\titleformat{\subsection}[hang]
{\filcenter\bf}
{\thesubsection.}
{1pt}
{}

\declaretheoremstyle[bodyfont=\normalfont]{normalbody}

\declaretheorem[numberwithin=section,name=Theorem]{theorem}
\declaretheorem[sibling=theorem,style=normalbody,name=Definition]{definition}

\declaretheorem[sibling=theorem,name=Lemma]{lemma}
\declaretheorem[sibling=theorem,name=Proposition]{proposition}

\declaretheorem[sibling=theorem,style=normalbody,name=Remark]{remark}

\newcommand{\Z}{\mathbb{Z}}

\newcommand{\C}{\mathbb{C}}

\renewcommand{\P}{\mathbb{P}}

\newcommand{\Aut}{\operatorname{Aut}}

\newcommand{\GL}{\mathrm{GL}}
\newcommand{\PGL}{\mathrm{PGL}}
\newcommand{\Div}{\mathrm{Div}}

\newcommand{\Fix}{\mathrm{Fix}}
\newcommand{\id}{\mathrm{id}}
\newcommand{\SL}{\mathrm{SL}}


\DeclareFontFamily{U}{wncy}{}
\DeclareFontShape{U}{wncy}{m}{n}{<->wncyr10}{}
\DeclareSymbolFont{mcy}{U}{wncy}{m}{n}
\DeclareMathSymbol{\Sh}{\mathord}{mcy}{"58}

\DeclareFontFamily{U}{wncy}{}
\DeclareFontShape{U}{wncy}{m}{n}{<->wncyr10}{}
\DeclareSymbolFont{mcy}{U}{wncy}{m}{n}
\DeclareMathSymbol{\Ch}{\mathord}{mcy}{"51}

\title{Fibrations associated to smooth quotients of abelian varieties}

\author{Gary Martinez-Nuñez}
\address{Departamento de Matemáticas, Facultad de Ciencias, Universidad de Chile}
\email{gary.martinez@ug.uchile.cl}

\begin{document}

\begin{abstract}
	Let $A$ be an abelian variety and $G$ a finite group of automorphisms of $A$ fixing the origin such that $A/G$ is smooth. The quotient $A/G$ can be seen as a fibration over an abelian variety whose fibers are isomorphic to a product of projective spaces. We classify how the fibers are glued in the case when the fibers are isomorphic to a projective space and we prove that, in general, the quotient $A/G$ is a fibered product of such fibrations.
	
	\vspace{1em}
	
	\mbox{ \textbf{ Keywords:} Abelian varieties, fibrations, groups action}
\end{abstract}

\maketitle

		\mbox{\hspace{1.6em} \textbf{ MSC codes (2020):} 14L30, 14K99.} 

%
%
%
%

\section{Introduction}

\indent In \cite{ALA} R. Auffarth and G. Lucchini Arteche study smooth quotients of abelian varieties by finite groups whose action fixes the origin. Let $A$ be an abelian variety and $G$ a finite group of automorphisms of $A$ fixing the origin such that $A/G$ is smooth. Let $A_{0}$ be the connected component of $A^{G}$ that contains 0 and $P_{G}$ be the complementary abelian subvariety with respect to a $G$-invariant polarization. The action of $G$ on $A$ induces an action of $G$ on $P_{G}$. There exists a fibration\footnote{The quotient $A/P_{G}$ is isomorphic to $A_{0}/\left(A_{0}\cap P_{G}\right)$, the base of the fibration in Proposition 2.9 \cite{ALA}} $A/G\to A/P_{G}$ and the fibers are isomorphic to $P_{G}/G$ and smooth (Proposition 2.9 \cite{ALA}).

If $A_{0}=0$, or equivalently $A=P_{G}$, the fibration $A/G\to A/P_{G} \cong 0$ has just one fiber and this corresponds to $A/G$. This case was studied in \cite{ALA} for dimension $\geq 3$, and the case of dimension 2 was studied in \cite{Queso} and can be summarized in the following two results:

\begin{theorem}[Theorem 1.1 \cite{ALA} - Theorem 1.1 \cite{Queso}]\label{maintheorempaper}
	Let $A$ be an abelian variety and let $G$ be a (non trivial) finite group of automorphisms of $A$ that fix the origin. Then the following conditions are equivalent:
		\begin{enumerate}
			\item $A/G$ is smooth and the analytic representation of $G$ is irreducible.
			\item $A/G\cong \P^{n}$.
			\item There exists an elliptic curve $E$ such that $A\cong E^{n}$ and $(A,G)$ satisfies exactly one of the following:
				\begin{enumerate}[a)]
					\item\label{theoremalpha} $G\cong C^{n}\rtimes S_{n}$ where $C$ is a non-trivial (cyclic) subgroup of automorphims of $E$ that fix the origin; here the action of $C^{n}$ is coordinatewise and $S_{n}$ permutes the coordinates.
					\item\label{theorembeta} $G\cong S_{n+1}$ and acts on 
						\[A\cong \{(x_{0},x_{1},\dots,x_{n})\in E^{n+1} \mid x_{0}+x_{1}+\cdots+x_{n}=0\},\]
by permutations.
					\item\label{theoremgamma} $E\cong \C/\Z[i]$ and $G$ is the order 16 subgroup of $\GL_{2}(\Z)$ generated by:
		\[\left\{\begin{pmatrix}-1 & i+1 \\ 0 & 1\end{pmatrix},\begin{pmatrix}-i & i-1 \\ 0 & i\end{pmatrix},\begin{pmatrix}-1 & 0 \\ i-1 & 1\end{pmatrix}\right\},\]
acting on $A\cong E^{2}$ in the obvious way.
				\end{enumerate}
		\end{enumerate}
\end{theorem}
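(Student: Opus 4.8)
The plan is to prove the equivalence of the three conditions in a cycle $(1)\Rightarrow(2)\Rightarrow(3)\Rightarrow(1)$, relying on the structure theory from \cite{ALA} and the two-dimensional analysis from \cite{Queso} for the low-dimensional cases that escape the general argument.

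\medskip

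\noindent\textbf{Sketch of approach.} First, for $(1)\Rightarrow(2)$, I would start from the fibration $A/G\to A/P_G$ of Proposition 2.9 in \cite{ALA}. Irreducibility of the analytic representation forces $A^G$ to be finite, hence $A_0=0$ and $A=P_G$, so $A/G$ is itself a smooth fiber. The key input is that a smooth quotient $A/G$ with $\dim A = n$ has the rational cohomology of a point in odd degrees and is rationally connected (being unirational, as the image of an abelian variety), and one shows its canonical bundle is anti-ample by a ramification computation: the pushforward of $\omega_A=\ent_A$ under the quotient map, combined with smoothness, pins down $K_{A/G}$ in terms of the branch divisor, and irreducibility of the representation leaves only the possibility that $A/G$ is Fano with Picard number one. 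A Kobayashi–Ochiai or Hwang–Mok-type characterization—or more elementarily, the classification of smooth quotients with $b_2=1$—then identifies $A/G\cong\P^n$. I would lean on \cite{ALA} for the precise cohomological bookkeeping rather than redo it.

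\medskip

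\noindent Second, $(2)\Rightarrow(3)$ is the substantive classification step. Assuming $A/G\cong\P^n$, the quotient map $\pi\colon A\to\P^n$ is finite of degree $|G|$, and by purity of the branch locus together with the fact that $\P^n$ is simply connected, $\pi$ is branched along a divisor $D\subset\P^n$; pulling back the Euler sequence and comparing with triviality of $\Omega_A$ gives strong numerical constraints on $\deg D$ and on $|G|$. One then analyzes the possible ramification: the stabilizer subgroups act as pseudoreflections on tangent spaces (Chevalley–Shephard–Todd, since the quotient is smooth), which forces $A$ to be isogenous to a power of a single elliptic curve $E$, and a careful examination of how the reflections are arranged yields the three families. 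The appearance of $\C/\Z[i]$ in case \eqref{theoremgamma} comes from the exceptional pseudoreflection groups available only when $E$ has extra automorphisms; tracking which finite complex reflection groups can be realized inside $\GL_n(\End(E))$ preserving a lattice is exactly what separates cases \eqref{theoremalpha}, \eqref{theorembeta}, \eqref{theoremgamma}. For $n=1$ this is classical, for $n=2$ I would cite \cite{Queso}, and for $n\geq 3$ the argument is that of \cite{ALA}.

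\medskip

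\noindent Finally, $(3)\Rightarrow(1)$ is a direct verification: in each of the three cases one checks that $A/G\cong\P^n$ (so smoothness is automatic) and that the analytic representation of $G$ on the tangent space $T_0A$ is irreducible. For \eqref{theoremalpha} the representation is the standard $n$-dimensional representation of $C^n\rtimes S_n$, which is irreducible since $C$ is nontrivial; for \eqref{theorembeta} it is the standard $(n)$-dimensional representation of $S_{n+1}$; for \eqref{theoremgamma} one computes directly that the given order-$16$ group acts irreducibly on $\C^2$. The main obstacle throughout is the classification step $(2)\Rightarrow(3)$—specifically, ruling out spurious reflection-group configurations and proving that the lattice-preservation constraint forces $A\cong E^n$ with $E$ as specified; everything else is either cohomological formalism or a finite check. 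Since the two-dimensional case genuinely requires the separate techniques of \cite{Queso} (the general \cite{ALA} argument needs $n\geq 3$), the proof is properly a synthesis of the two references, and I would present it as such rather than attempt a uniform treatment.
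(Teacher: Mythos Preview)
The paper does not contain a proof of this statement: it is quoted verbatim as Theorem~1.1 of \cite{ALA} together with Theorem~1.1 of \cite{Queso}, and the present article only \emph{uses} it as input. So there is no in-paper argument to compare your sketch against; your proposal is effectively a summary of what you expect those references to contain, and on that level the overall architecture---reduce to $A_0=0$, invoke Chevalley--Shephard--Todd to get a complex reflection group, classify which reflection groups preserve a lattice in $\End(E)^n$, and verify the three outputs directly---is the right shape and matches the spirit of \cite{ALA}.

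That said, there is a genuine slip in your $(1)\Rightarrow(2)$ paragraph: you write that $A/G$ is ``rationally connected (being unirational, as the image of an abelian variety)''. An abelian variety is never unirational---it contains no rational curves at all---so nothing about being a quotient of $A$ gives unirationality or rational connectedness for free. The rational connectedness of $A/G$ is part of what has to be \emph{proved} (and indeed follows once you know it is $\P^n$, not before). The actual mechanism in \cite{ALA} goes through a more careful analysis: one uses smoothness plus the pseudoreflection structure to control the canonical bundle and the Picard group, and then a characterization of $\P^n$ (Fano of index $n+1$, or a cohomological recognition) finishes. Your invocation of Kobayashi--Ochiai is in the right direction, but the sentence justifying rational connectedness should be deleted or replaced by the correct input. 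Everything else in your outline is reasonable, and your explicit acknowledgment that $n=2$ requires the separate treatment of \cite{Queso} is exactly how the paper itself frames the citation.
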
 
	
	Even if $A_{0}=0$, the analytic representation of $G$ on $A$ might not always be irreducible. However, in this case the abelian variety $A$ and the group $G$ can be conveniently decomposed as in the next result:

\begin{theorem}[Theorem 1.3 \cite{ALA}]\label{descomposicionP_G}

	Let $A$ be an abelian variety and let $G$ be a finite subgroup of $\Aut_{0}(A)$ such that $A/G$ is smooth. Assume that $A_{0}=0$. There exist $A_{1},\dots,A_{r}$ abelian subvarieties of $A$ and $G_{i}\in\Aut_{0}(A_{i})$ such that $G\cong G_{1}\times\cdots\times G_{r}$, $A\cong A_{1}\times\cdots\times A_{r}$ and the pairs $(A_{i},G_{i})$ satisfy the conditions of Theorem \ref{maintheorempaper}. Moreover,
				\[A/G\cong A_{1}/G_{1}\times\cdots A_{r}/G_{r}.\]
\end{theorem}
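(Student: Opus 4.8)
The plan is to decompose the analytic action of $G$ on $V:=T_0A$, to build an abelian subvariety out of each ``isotypic piece'', and then to use smoothness twice: once to see that the \emph{group} splits as a product compatible with the decomposition, and a second, more delicate time, to upgrade the resulting isogeny $\prod A_i\to A$ to an isomorphism. First I would invoke the Chevalley--Shephard--Todd criterion at the origin: since $A/G$ is smooth and $0$ has stabiliser $G$, the formal neighbourhood of $[0]\in A/G$ agrees with that of $V/G$, so $G$ acts on $V$ as a finite complex reflection group, and $A_0=0$ is equivalent to $V^G=0$. The structure theory of complex reflection groups then furnishes $V=V_1\oplus\cdots\oplus V_r$ and $G=G_1\times\cdots\times G_r$ in which each $G_i$ acts on $V_i$ as a nontrivial irreducible complex reflection group and trivially on $V_j$ for $j\ne i$. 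For each $i$ put $H_i:=\prod_{j\ne i}G_j$ and let $A_i$ be the connected component of $A^{H_i}$ through $0$; one checks that $T_0A_i=V^{H_i}=V_i$, that $G_i$ acts faithfully on $A_i$ while $H_i$ acts trivially, and --- since its differential at $0$ is the identity of $V=\bigoplus V_i$ --- that the addition map $\mu\colon A_1\times\cdots\times A_r\to A$ is a $G$-equivariant isogeny, with $G=\prod G_i$ acting componentwise on the source.

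The second step is to show that each pair $(A_i,G_i)$ satisfies Theorem~\ref{maintheorempaper}. The key point is that for every $x\in A$ the stabiliser $G_x$ acts on $T_xA\cong V$ as a reflection group (Chevalley--Shephard--Todd, using that $A/G$ is smooth), and that a reflection of $G=\prod_j G_j$ necessarily lies in a single factor (its fixed space is $\bigoplus_j V_j^{s_j}$, of codimension one); hence $G_x=\prod_j (G_x\cap G_j)$ with each $G_x\cap G_j$ a reflection group on $V_j$. In particular the stabiliser of $x$ in $G_i$, namely $G_x\cap G_i$, acts on $T_xA$ as a reflection group, so $A/G_i$ is smooth. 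Applying Proposition~2.9 of \cite{ALA} to the pair $(A,G_i)$ --- whose fixed component $(A^{G_i})^0$ has, with respect to a $G$-invariant polarisation, complementary abelian subvariety precisely $A_i$ (both are connected with tangent space $V_i$ at the origin) --- realises $A_i/G_i$ as a smooth fibre; since moreover $G_i$ acts irreducibly on $V_i=T_0A_i$, condition~(1) of Theorem~\ref{maintheorempaper} holds, whence $A_i/G_i\cong\P^{n_i}$ with $n_i=\dim A_i$.

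The heart of the proof is that $\mu$ is in fact an isomorphism. The isogeny $\mu$ descends to a finite surjective morphism $\bar\mu\colon P:=\prod_i A_i/G_i\to A/G$ of degree $d:=\deg\mu$; since $P\cong\prod_i\P^{n_i}$ and $A/G$ are smooth, $\bar\mu$ is finite and flat. I would show it is unramified in codimension one: for $x\in\prod A_i$ over $\bar x\in P$ and $y:=\mu(x)$, one checks that $\bar\mu$ is \'etale at $\bar x$ exactly when $G_x=G_y$, and that the automatic inclusion $G_x\subseteq G_y$ is strict only if $G_y$ contains an element $g$ acting nontrivially on at least two of the $V_j$ --- in which case $\Fix_A(g)$, hence its image in $A/G$, has codimension at least two. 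By purity of the branch locus (valid because $A/G$ is regular and $P$ is normal) it follows that $\bar\mu$ is \'etale everywhere. Finally $A/G$ is unirational, being finitely dominated by the rational variety $\prod\P^{n_i}$, hence simply connected, so a connected \'etale cover of it is trivial; thus $d=1$ and $\bar\mu$, whence $\mu$, is an isomorphism. Transporting the componentwise $G$-action through $\mu$ then gives $A\cong A_1\times\cdots\times A_r$, $G\cong G_1\times\cdots\times G_r$ and $A/G\cong A_1/G_1\times\cdots\times A_r/G_r$, which together with the second step is the assertion.

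I expect the main obstacle to be precisely this last step: the codimension estimate, together with the verification that \'etaleness of $\bar\mu$ at $\bar x$ is governed exactly by the equality $G_x=G_y$. This is the point where the product structure of the reflection group --- the fact that codimension-one fixed loci are concentrated in a single factor --- is indispensable, and it is also the place where the hypothesis that $A/G$ (rather than just $V/G$) is smooth is used in an essential way.
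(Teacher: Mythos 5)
This statement is not proved in the paper you were given: it is Theorem~1.3 of \cite{ALA}, imported verbatim, so there is no in-paper argument to compare yours against. Judged on its own terms, your reconstruction is essentially correct and complete in outline. The linearization at the origin plus Chevalley--Shephard--Todd, the splitting $V=V_1\oplus\cdots\oplus V_r$, $G=G_1\times\cdots\times G_r$ of the reflection representation (no trivial summand since $V^G=0$), the identification $T_0A_i=V^{H_i}=V_i$ and the resulting $G$-equivariant isogeny $\mu$ are all sound; so is the observation that a pseudo-reflection of $G$ lies in a single factor, which gives $G_x=\prod_j(G_x\cap G_j)$, smoothness of $A/G_i$, and then (via Proposition~2.9 of \cite{ALA}, noting that the complement of $(A^{G_i})^0$ with respect to a $G$-invariant polarization has tangent space exactly the nontrivial isotypic part $V_i$, hence equals $A_i$) smoothness and irreducibility for $(A_i,G_i)$, so Theorem~\ref{maintheorempaper} applies without circularity. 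The key closing argument also checks out: the local model of $\bar\mu$ at $\bar x$ is $V/G_x\to V/G_{\mu(x)}$ of degree $[G_{\mu(x)}:G_x]$, any $g\in G_{\mu(x)}$ supported in a single factor already lies in $G_x$ (since $gy-y=gx_j-x_j\in A_j$ must vanish), so non-\'etale points sit over $\bigcup_g\Fix_A(g)$ with $g$ nontrivial in at least two factors, a set of codimension $\ge 2$; Zariski--Nagata purity then forces $\bar\mu$ \'etale, and simple connectedness of the smooth projective unirational variety $A/G$ (Serre) gives degree one. Two small points you should make explicit in a written version: that $\deg\bar\mu=\deg\mu$ (compare generic fibres, where both actions are free), and that $G_i$ genuinely preserves and acts faithfully on $A_i$ while $H_i$ acts trivially, which is what makes $\mu$ equivariant for the componentwise action. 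Whether this is the route taken in \cite{ALA} I will not certify, but as a self-contained proof of the quoted statement, modulo the other imported results, it works.
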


	Thus, if $A_{0}=0$, the quotient $A/G$ is a product of projective spaces. In the case when $A_{0}\not=0$ the fibers of the fibration $A/G\to A/P_{G}$ are isomorphic to $P_{G}/G$. The group $G$ acts naturally on $P_{G}$ and gives a faithful representation of $G$ on $T_{0}(P_{G})$, the analytic representation of $G$ on $P_{G}$. Since $\dim(P_{G}^{G})=0$, by Theorem \ref{maintheorempaper} and Theorem \ref{descomposicionP_G} the fibers $P_{G}/G$ are isomorphic to a product of projective spaces.
	
	In this article we study how the fibers are glued in the case when $A_{0}\not= 0$, and therefore complete the classification of smooth quotients of abelian varieties by finite groups that fix the origin given by Auffarth and Lucchini Arteche.
	
	Smooth quotients of abelian varieties by finite groups whose action fixes the origin have appeared in other contexts, for example in the study of elliptic algebras introduced by Feigin and Odesskii in \cite{OdFeEA}. Much of the representation theory of these algebras is controlled by its characteristic variety and A. Chirvasitu, R. Kanda and S. P. Smith prove in \cite{CKSchar} that the characteristic variety of an elliptic algebra is isomorphic to $E^{g}/S$, with $E^{g}$ identified with the points of $E^{g+1}$ whose coordinates add up to 0 and $S$ is a subgroup of the permutation group $S_{g+1}$ generated by simple reflections. In particular, it is isomorphic to a smooth quotient of an abelian variety by a finite group whose action fixes the origin. In \cite{CKS}, the authors study more in detail the structure of the quotient variety $E^{g}/S$, which corresponds to a particular case of our Theorem \ref{maintheorem} below.  
	
	\begin{remark} Having completed such a classification a natural question that arises would be \textit{what happens in the case when $G$ does not fix a point of $A$?}. This question has been studied by the authors of \cite{ALA} in \cite{ALAtori}.
	\end{remark}
	
\subsection*{Main results} 
	
	If $A_{0}$ is trivial, the fibration has a trivial base and the quotient $A/G$ is isomorphic to a product of projective spaces, because of Theorem \ref{maintheorempaper} and Theorem \ref{descomposicionP_G}. However, if $A_{0}\not=0$ the base is non-trivial and the fibers are isomorphic to a product of projective spaces. In the case when the analytic representation of $G$ on $P_{G}$ is irreducible, i.e. when the fibers are isomorphic to a single projective space, we get the first result of this work.

	\begin{theorem}\label{maintheorem}
		Let $A$ be an abelian variety and $G$ a finite subgroup of $\Aut_{0}(A)$ such that $A/G$ is smooth. Let $A_{0}$ be the connected component of $A^{G}$ that contains 0 and let $P_{G}$ be the complementary abelian subvariety with respect to a $G$-invariant polarization. If the analytic representation of $G$ on $P_{G}$ is irreducible, there exists a trivialization
		
		\[\xymatrix{ A_{0}\times \P^{n} \ar[rr]^{/\Delta} \ar[d] & & A/G \ar[d] \\ A_{0} \ar[rr]_{/\Delta} & &  A / P_{G},  }\]
of the fibration $A/G\to A/P_{G}$, with $\Delta\cong A_{0}\cap P_{G}$, where $\Delta$ acts on $A_{0}$ by translation and on the fibers as one of the following:

		\begin{enumerate}
			\item \label{caso1} $\Delta\cong\{0\}$ and therefore acts trivially.
			\item \label{caso2} $\Delta\cong \Z/2\Z$ and acts with generator $\delta:[z_{0}:\cdots:z_{n}]\mapsto[z_{0}:-z_{1}:\cdots:(-1)^{n}z_{n}]$.
			\item \label{caso3} $\Delta\cong \Z/3\Z$ and acts with generator $\delta:[z_{0}:\cdots:z_{n}]\mapsto[z_{0}:\zeta_{3}z_{1}:\cdots:\zeta_{3}^{n}z_{n}]$.
			\item \label{caso4} $\Delta\cong \left(\Z/2\Z\right)^{2}$ and there exists a set of generators that act as
						\begin{align*} 
							\delta:[z_{0}:\cdots:z_{n}] &\mapsto [z_{0}:\cdots:(-1)^{n}z_{n}], \\
							\delta':[z_{0}:\cdots:z_{n}] &\mapsto [z_{n}:z_{n-1}:\cdots:z_{1}:z_{0}].
						\end{align*}

			\item \label{caso5} $\Delta$ is isomorphic to a subgroup of $\left(\Z/(n+1)\Z\right)^{2}$ and there exists a set of generators that act as
				\begin{align*} 
					\delta:[z_{0}:\cdots:z_{n}] &\mapsto [z_{0}:\zeta_{n+1}^{a}z_{1}:\cdots:\zeta_{n+1}^{an}z_{n}], \\
					\delta':[z_{0}:\cdots:z_{n}] &\mapsto [z_{n+1-b}:\cdots:z_{n-1}:z_{n}:z_{0}:\cdots:z_{n-b}],
				\end{align*}
with $a$ and $b$ positive integers, $b\lvert (n+1)$ and $a\lvert b$, where $z_{-1}:=z_{n}$.
		\end{enumerate}
	\end{theorem}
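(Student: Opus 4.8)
The plan is to reduce the statement to an analysis of how the fibration $A/G \to A/P_G$ can be trivialized and which residual group actions on $\mathbb{P}^n$ can occur. Since the analytic representation of $G$ on $P_G$ is irreducible and $\dim P_G^G = 0$, Theorem \ref{maintheorempaper} tells us that $P_G \cong E^n$ for an elliptic curve $E$ and $(P_G, G)$ is one of the three explicit pairs $(\mathrm{a})$, $(\mathrm{b})$, $(\mathrm{c})$, with $P_G/G \cong \mathbb{P}^n$. I would first set up the trivialization: the natural map $A_0 \times P_G \to A$ is an isogeny with kernel the antidiagonal copy of $\Delta := A_0 \cap P_G$, so $A \cong (A_0 \times P_G)/\Delta$, and quotienting further by $G$ (which acts trivially on $A_0$ and in the given way on $P_G$) yields $A/G \cong (A_0 \times P_G/G)/\Delta = (A_0 \times \mathbb{P}^n)/\Delta$, where $\Delta$ acts by translation on $A_0$ and through the induced action on $\mathbb{P}^n = P_G/G$. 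This is the diagram in the statement. So everything comes down to: (i) identifying which subgroups $\Delta \subseteq P_G$ descend to $\mathbb{P}^n = P_G/G$, equivalently which subgroups of $P_G$ are normalized by $G$ in the relevant sense, and (ii) computing the induced automorphism of $\mathbb{P}^n$ explicitly in each of the cases $(\mathrm{a})$, $(\mathrm{b})$, $(\mathrm{c})$.

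For step (ii), the key point is that $\mathrm{Aut}(P_G/G) = \mathrm{Aut}(\mathbb{P}^n) = \mathrm{PGL}_{n+1}(\mathbb{C})$, and an element of $P_G$ acting by translation on $P_G$ descends to $\mathbb{P}^n$ precisely when the translation normalizes the $G$-action; the resulting automorphism of $\mathbb{P}^n$ is then determined by how the quotient map $P_G \to \mathbb{P}^n$ is described by theta functions / the explicit coordinates in each case. In case $(\mathrm{a})$, $G = C^n \rtimes S_n$ with $C \subseteq \mathrm{Aut}_0(E)$ cyclic of order $m \in \{2,3,4,6\}$; the quotient map $E^n \to \mathbb{P}^n$ can be written using a basis of $C$-anti-invariant (up to character) sections, and a translation by a point of order dividing $n+1$ appropriately placed acts on the homogeneous coordinates as the cyclic scaling $[z_0:\cdots:z_n] \mapsto [z_0 : \zeta^a z_1 : \cdots : \zeta^{an} z_n]$, which is exactly the shape appearing in cases $(\ref{caso2})$, $(\ref{caso3})$, $(\ref{caso5})$. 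In case $(\mathrm{b})$, $G = S_{n+1}$ acting by permutations on $\{(x_0,\dots,x_n) : \sum x_i = 0\} \subseteq E^{n+1}$, the quotient is $\mathbb{P}^n$ via the classical identification (symmetric functions / the projective embedding of $E^{n+1}/S_{n+1}$ restricted), and an $(n+1)$-torsion translation $(x_i) \mapsto (x_i + t_i)$ with $\sum t_i = 0$ descends to the cyclic shift $[z_0 : \cdots : z_n] \mapsto [z_{n+1-b}:\cdots:z_n:z_0:\cdots:z_{n-b}]$ — the "rotation" generator $\delta'$ — while diagonal translations give the scaling generator $\delta$; the compatibility conditions $b \mid (n+1)$ and $a \mid b$ come from requiring the pair $(\delta,\delta')$ to actually commute (hence generate an abelian group, as $\Delta$ must be) and to come from a genuine subgroup of $P_G$. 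Case $(\mathrm{c})$ with $E = \mathbb{C}/\mathbb{Z}[i]$ and the explicit order-$16$ group is handled by a finite computation producing case $(\ref{caso4})$: here $\Delta \cong (\mathbb{Z}/2\mathbb{Z})^2$ with the scaling-by-signs generator and the reversal $[z_0:\cdots:z_n]\mapsto[z_n:\cdots:z_0]$.

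The main obstacle I expect is step (i) together with the bookkeeping in (ii): one must determine, for each of the three families, exactly which translation subgroups $\Delta \subseteq P_G$ are compatible with the $G$-action (so that $A/G$ is well-defined and smooth — smoothness of $A/G$ forces $\Delta$ to act freely enough, or rather the combined quotient to remain smooth, which constrains $\Delta$), and then read off the induced $\mathrm{PGL}_{n+1}$-action in a normalized coordinate system. Concretely, I would: (1) recall/recompute the explicit quotient map $P_G \to \mathbb{P}^n$ in each case via an appropriate linear system (degree-$(n+1)$ or degree-$m$ line bundles on $E$, resp. the permutation-invariant bundle on the diagonal-zero locus); (2) compute the image in $\mathrm{PGL}_{n+1}(\mathbb{C})$ of a translation $t \in P_G$, obtaining a monomial matrix (permutation $\times$ diagonal of roots of unity) whose permutation part is a cyclic rotation (from the $S_n$- or $S_{n+1}$-structure) and whose diagonal part consists of roots of unity (from the torsion order of $t$); (3) impose that $\Delta$ is a subgroup — forcing the relations $b \mid (n+1)$, $a \mid b$ and, in case $(\mathrm{c})$, exactly $(\mathbb{Z}/2\mathbb{Z})^2$ — and that $A_0 \cap P_G \twoheadrightarrow \Delta$ is the right thing; and (4) assemble the five cases, noting that $(\ref{caso1})$–$(\ref{caso4})$ are the degenerate/small cases of the general pattern while $(\ref{caso5})$ is the generic one coming from family $(\mathrm{a})$ and the "diagonal + rotation" part of family $(\mathrm{b})$. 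The genuinely delicate part is verifying that these are \emph{all} the possibilities — i.e. that no other monomial automorphism of $\mathbb{P}^n$ arises — which requires knowing that any translation of $P_G$ normalizing $G$ must have the restricted form dictated by the structure of the semidirect/permutation action, and this is where the irreducibility hypothesis and the explicit classification of Theorem \ref{maintheorempaper} do the essential work.
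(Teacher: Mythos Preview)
Your overall architecture is correct and matches the paper: set up the isogeny $A_0\times P_G\to A$ with kernel $\Delta\cong A_0\cap P_G$, deduce $A/G\cong(A_0\times\mathbb{P}^n)/\Delta$, and then analyze the induced action of $\Delta$ on $\mathbb{P}^n=P_G/G$ separately in the three cases (a), (b), (c) of Theorem~\ref{maintheorempaper}. However, the details of your case analysis are substantially off, and you are missing the key technical ideas in both type~(a) and type~(b).

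First, you have misattributed which outcomes come from which family. In type~(a) (the paper's type~$\alpha$), the group $\Delta$ is a subgroup of $P_G^G\cong E^C$, the $C$-fixed points of $E$; this has nothing to do with $(n+1)$-torsion. Depending on $|C|\in\{2,3,4,6\}$ one gets $E^C\cong(\mathbb{Z}/2\mathbb{Z})^2,\ \mathbb{Z}/3\mathbb{Z},\ \mathbb{Z}/2\mathbb{Z},\ 0$ respectively, producing cases (\ref{caso1})--(\ref{caso4}) but \emph{not} case~(\ref{caso5}). Case~(\ref{caso5}) arises only from type~(b), where $P_G^G\cong E[n+1]\cong(\mathbb{Z}/(n+1)\mathbb{Z})^2$. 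Type~(c) yields only $n=2$ with $P_G^G\cong(\mathbb{Z}/2\mathbb{Z})^2$, and the paper shows this coincides with the type-$\alpha$ answer (cases (\ref{caso2}) and (\ref{caso4})).

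Second, you are missing the mechanisms that actually pin down the $\mathrm{PGL}_{n+1}$-action. For type~(a) the paper does \emph{not} compute via theta functions on $E^n$; instead it factors $E^n\to(\mathbb{P}^1)^n\to\mathbb{P}^n$ (quotient by $C^n$, then by $S_n$ via elementary symmetric polynomials), observes that a diagonal translation by $x\in E^C$ acts diagonally on $(\mathbb{P}^1)^n$ hence is determined by a single element of $\mathrm{PGL}_2(\mathbb{C})$, and then invokes the fact that isomorphic finite subgroups of $\mathrm{PGL}_2(\mathbb{C})$ are conjugate. Pushing a chosen representative through the explicit symmetric-polynomial map gives the formulas. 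For type~(b) the key content is a pair of lemmas you do not mention: one computes that two generating translations $t_G,u_G\in\mathrm{PGL}_{n+1}(\mathbb{C})$ each have exactly $n+1$ fixed points in $\mathbb{P}^n$ and that these fixed-point sets are disjoint; the other is an abstract result in $\mathrm{PGL}_{n+1}(\mathbb{C})$ saying that any two commuting order-$(n+1)$ elements with this fixed-point behavior are simultaneously conjugate to the standard Heisenberg pair (diagonal of roots of unity, cyclic shift). Your sketch (``symmetric functions'', ``monomial matrix'') does not supply these ingredients, and without them you cannot conclude uniqueness up to conjugacy, which is exactly the ``genuinely delicate part'' you flag at the end.
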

	
	In the case when the analytic representation of $G$ on $P_{G}$ is not irreducible, we prove that the quotient $A/G$ is isomorphic to a fibered product of fibrations as in Theorem \ref{maintheorem}.
	\begin{theorem}\label{maintheorem2}
		Let $A$ be an abelian variety and $G$ a finite non trivial subgroup of $\Aut_{0}(A)$ such that $A/G$ is smooth. Let $A_{0}$ be the connected component of $A^{G}$ that contains 0 and let $P_{G}$ be the complementary abelian subvariety with respect to a $G$-invariant polarization. There exist pairs $(B_{1},G_{1}),\dots,(B_{r},G_{r})$ of abelian varieties $B_{i}$ and subgroups $G_{i}$ of $\Aut_{0}(B_{i})$ such that 
		\begin{itemize}
			\item $G\cong G_{1}\times \cdots \times G_{r}$.
			\item The pairs $(B_{i},G_{i})$ satisfy the conditions of Theorem \ref{maintheorem}. In particular, if we denote by $B_{i,0}$ the connected component of $B_{i}^{G_{i}}$ that contains 0 and $P_{G_{i}}$ the complementary abelian subvariety with respect to a $G_{i}$-invariant polarization, there exists a fibration $B_{i}/G_{i}\to B_{i}/P_{G_{i}}$. 
			\item For all $i\in\{1,\dots,r\}$ $B_{i}/P_{G_{i}}\cong A/P_{G}$. 
			\item There exists an isomorphism
				\[ A/G\cong B_{1}/G_{1}\underset{A/P_{G}}{\times}\cdots\underset{A/P_{G}}{\times} B_{r}/G_{r}.\]
		\end{itemize}

	\end{theorem}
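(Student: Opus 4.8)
The plan is to reduce the statement to Theorem \ref{maintheorem}: first peel off $A_{0}$ as a factor common to all pieces, and then distribute it among the irreducible constituents of the fibre $P_{G}/G$.

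I would begin with the elementary geometry of the situation. The addition morphism $A_{0}\times P_{G}\to A$ is an isogeny whose kernel is the antidiagonal copy of $\Delta:=A_{0}\cap P_{G}$; and since $A_{0}\subseteq A^{G}$ we have $\Delta\subseteq P_{G}^{G}$, so translation by $\Delta$ commutes with the action of $G$ on $A_{0}\times P_{G}$ (which is trivial on $A_{0}$). Hence
\[ A/G\;\cong\;(A_{0}\times P_{G})\big/(G\times\Delta)\;\cong\;\bigl(A_{0}\times(P_{G}/G)\bigr)\big/\Delta ,\]
with $\Delta$ acting by translation on $A_{0}$. Since $\dim P_{G}^{G}=0$, Theorem \ref{descomposicionP_G} applies to $(P_{G},G)$ and gives $P_{G}\cong P_{1}\times\cdots\times P_{r}$, $G\cong G_{1}\times\cdots\times G_{r}$ with each $(P_{i},G_{i})$ as in Theorem \ref{maintheorempaper}; in particular $P_{i}/G_{i}\cong\P^{n_{i}}$, so $P_{G}/G\cong\prod_{i}\P^{n_{i}}$, and an element $\delta=(\delta_{i})_{i}\in\Delta\subseteq\prod_{i}P_{i}^{G_{i}}$ acts coordinatewise by the automorphism $\rho_{i}(\delta)$ of $\P^{n_{i}}$ induced by translation by $\delta_{i}$ on $P_{i}$. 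Thus $A/G\cong\bigl(A_{0}\times\prod_{i}\P^{n_{i}}\bigr)/\Delta$ for the diagonal action.

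For each $i$ I would then build the pair $(B_{i},G_{i})$. Set $N_{i}:=\ker\rho_{i}=\ker(\Delta\to P_{i}^{G_{i}})$, let $\Delta_{i}:=\Delta/N_{i}$, which embeds into $P_{i}^{G_{i}}$, let $B_{i,0}:=A_{0}/N_{i}$, into which $\Delta_{i}$ also embeds, and put
\[ B_{i}:=(B_{i,0}\times P_{i})\big/\Delta_{i} ,\]
with $\Delta_{i}$ acting antidiagonally by translations. The action on $B_{i,0}$ is free, so $B_{i}$ is an abelian variety, and $G_{i}$ (trivial on $B_{i,0}$, natural on $P_{i}$) descends to a faithful subgroup of $\Aut_{0}(B_{i})$ since $\Delta_{i}\subseteq P_{i}^{G_{i}}$. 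One then checks: the image of $B_{i,0}$ is the connected component $(B_{i})_{0}$ of $B_{i}^{G_{i}}$ through $0$; taking as $G_{i}$-invariant polarization the descent of a product polarization (possible because translations act trivially on $\mathrm{NS}$), the image of $P_{i}$ is the complementary abelian subvariety $P_{G_{i}}$; the analytic representation of $G_{i}$ on $P_{G_{i}}$ is the one on $P_{i}$, hence irreducible; and, using $B_{i,0}=A_{0}/N_{i}$, $\Delta_{i}=\Delta/N_{i}$ and that $N_{i}$ acts trivially on $\P^{n_{i}}$,
\[ B_{i}/G_{i}\cong(B_{i,0}\times\P^{n_{i}})/\Delta_{i}\cong(A_{0}\times\P^{n_{i}})/\Delta,\qquad B_{i}/P_{G_{i}}\cong B_{i,0}/\Delta_{i}\cong A_{0}/\Delta\cong A/P_{G},\]
the last isomorphism being $A_{0}/(A_{0}\cap P_{G})\cong A/P_{G}$, as noted in the introduction. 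So $(B_{i},G_{i})$ satisfies the hypotheses of Theorem \ref{maintheorem}, $B_{i}/P_{G_{i}}\cong A/P_{G}$, and its fibration $B_{i}/G_{i}\to B_{i}/P_{G_{i}}$ is identified with $(A_{0}\times\P^{n_{i}})/\Delta\to A_{0}/\Delta$.

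Finally I would identify $A/G$ with the fibered product. The morphism
\[ A_{0}\times\prod_{i}\P^{n_{i}}\;\longrightarrow\;B_{1}/G_{1}\underset{A/P_{G}}{\times}\cdots\underset{A/P_{G}}{\times}B_{r}/G_{r},\qquad (a,(p_{i})_{i})\mapsto\bigl([a,p_{i}]\bigr)_{i},\]
is surjective and $\Delta$-invariant for the diagonal action, and its fibres are exactly the diagonal $\Delta$-orbits: a coincidence $[a,p_{i}]=[a',p_{i}']$ in the $i$-th factor forces $a'-a\in\Delta$, and since $\Delta\hookrightarrow A_{0}$ this element is independent of $i$, so $(a',(p_{i}')_{i})=\delta\cdot(a,(p_{i})_{i})$ for one $\delta\in\Delta$. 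After the faithfully flat base change $A_{0}\to A_{0}/\Delta$ the morphism becomes the identity of $A_{0}\times\prod_{i}\P^{n_{i}}$, so it descends to an isomorphism $\bigl(A_{0}\times\prod_{i}\P^{n_{i}}\bigr)/\Delta\xrightarrow{\ \sim\ }B_{1}/G_{1}\times_{A/P_{G}}\cdots\times_{A/P_{G}}B_{r}/G_{r}$; by the first step the source is $A/G$, and $G\neq\{1\}$ forces $P_{G}\neq0$, hence $r\geq1$. The main obstacle is the third paragraph: the careful construction of $B_{i}$, and in particular pinning down the complementary abelian subvariety $P_{G_{i}}$ — which a priori depends on the choice of $G_{i}$-invariant polarization — so that $B_{i}/P_{G_{i}}$ is genuinely $A/P_{G}$ and the fibration on $B_{i}$ is the expected one. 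Granting that bookkeeping, the fibered-product identification is formal and rests only on the injectivity of $\Delta\hookrightarrow A_{0}$.
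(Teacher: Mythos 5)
Your proposal is correct and reaches the statement, but it diverges from the paper's proof in two places worth comparing. The skeleton is the same: both you and the paper apply Theorem \ref{descomposicionP_G} to $(P_{G},G)$ to get $P_{G}\cong P_{1}\times\cdots\times P_{r}$, $G\cong G_{1}\times\cdots\times G_{r}$, and both identify $A/G\cong\bigl(A_{0}\times\prod_{i}\P^{n_{i}}\bigr)/\Delta$. The first difference is the construction of $B_{i}$: the paper sets $B_{i}:=A/\mathcal{P}_{i}$ with $\mathcal{P}_{i}=\prod_{k\neq i}P_{k}$, which makes the comparison morphisms $A/G\to B_{i}/G_{i}$ immediate and exhibits $B_{i}$ as $(A_{0}\times P_{i})/\Delta_{i}$ with $\Delta_{i}\cong\Delta$; you build the same variety abstractly as $\bigl((A_{0}/N_{i})\times P_{i}\bigr)/\Delta_{i}$ with $\Delta_{i}=\Delta/N_{i}$, which forces you to construct the maps to the fibered product by hand from the trivializing cover and to re-identify $B_{i,0}$ and $P_{G_{i}}$ in the abstract model. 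On that last point your justification via ``descent of a product polarization'' is shaky as stated (an invariant polarization need not descend along the isogeny), but it is also unnecessary: $P_{G_{i}}$ is independent of the chosen $G_{i}$-invariant polarization (its tangent space is the sum of the nontrivial isotypic components of the analytic representation), or one can argue as the paper does, by noting that the image of $P_{i}$ is a connected subvariety of $P_{G_{i}}$ of the same dimension; similarly, your identification $N_{i}=\ker(\Delta\to P_{i}^{G_{i}})$ uses the faithfulness of the translation action of $P_{G}^{G}$ on $P_{G}/G$, a fact the paper also invokes. The second, more substantive difference is the fibered-product step: the paper verifies the universal property directly, chasing points through the trivializations to construct and uniquely determine $\Psi$, whereas you base-change along the finite \'etale cover $A_{0}\to A_{0}/\Delta\cong A/P_{G}$, observe that every $B_{i}/G_{i}$ (and $A/G$ itself) trivializes to $A_{0}\times\P^{n_{i}}$ there because the $\Delta$-action on $A_{0}$ is free, and conclude by the facts that fibered products commute with base change and that being an isomorphism descends along faithfully flat morphisms. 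Your descent argument is shorter and cleaner (and renders your preliminary set-theoretic fibre computation superfluous); the paper's diagram chase is more elementary and self-contained. Both are valid, so apart from tightening the polarization bookkeeping noted above, your proof goes through.
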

	
\subsection*{Structure of the article}
	In Section \ref{reduccionalcasoirreducible} we talk about general aspects of this classification and we prove Theorem \ref{maintheorem2}.
	 
	In Section \ref{casoirreducible} we give a proof of Theorem \ref{maintheorem}. In this section, the pairs $(P_{G},G)$ are separated into pairs of type $\alpha$, $\beta$ and $\gamma$, corresponding to the three examples in Theorem \ref{maintheorempaper} (cf. Definition \ref{tiposfibrados}). In Section \ref{tipoalpha} we study the pairs of type $\alpha$ and obtain the main result corresponding to points \ref{caso2}, \ref{caso3} and \ref{caso4} of Theorem \ref{maintheorem}. In Section \ref{tipobeta} we study the pairs of type $\beta$ and prove the main result corresponding to point \ref{caso5} of Theorem \ref{maintheorem}. The pairs of type $\gamma$ are studied in Section \ref{tipogamma}, the main result proving that the fibers are glued in the same way as the pairs of type $\alpha$.
	

%
%
	
\subsection*{Acknowledgements}
	I would like to thank Dr. Robert Auffarth and Dr. Giancarlo Lucchini Arteche for many enriching conversations. This work was partially funded by a scholarship of the Departamento de Matemática of the Universidad de Chile.

%
%


\section{Reduction to the irreducible case}\label{reduccionalcasoirreducible}

\indent  Let $A$ be an abelian variety and $G$ be a finite subgroup of $\Aut_{0}(A)$ such that $A/G$ is smooth. Let $A_{0}$ be the connected component of $A^{G}$ that contains 0 and $P_{G}$ be the complementary abelian subvariety with respect to a $G$-invariant polarization. As stated above, there exists a fibration $A/G\to A/P_{G}$ whose fibers are smooth and isomorphic to $P_{G}/G$. Consider the isogeny 
					\begin{align*}
						A_{0}\times P_{G} &\to A; \\
							(a,b) &\mapsto a-b,
					\end{align*}
with kernel
	\[\Delta:=\{(a,a)\in A_{0}\times P_{G} \mid a\in A_{0}\cap P_{G}\}.\]
	
	The subgroup $\Delta$ acts by translations coordinatewise on $A_{0}\times P_{G}$. The group $G$ acts on $A_{0}\times P_{G}$ coordinatewise and this action commutes with the action of $\Delta$ because the action of $\Delta$ is given by translations by $G$-invariant elements. We therefore obtain a commutative diagram
		\begin{equation}\label{trivializacion}
			\xymatrix{ A_{0}\times P_{G} \ar[rr]^{/\Delta} \ar[d]_{/G} & & A \ar[d]^{/G} \\ A_{0}\times \left(P_{G}/G\right) \ar[rr]_{/\Delta} \ar[d] & & A/G \ar[d] \\ A_{0} \ar[rr]_{/\Delta} & & A/ P_{G}. }
		\end{equation}

	In the bottom square of the diagram $A_{0}\times\left(P_{G}/G\right)\to A_{0}$ is the projection onto the first coordinate, and therefore that part of the diagram is a trivialization of the fibration $A/G\to A/P_{G}$ given by Proposition 2.9 in \cite{ALA}. The action of $\Delta$ passes to $A_{0}\times (P_{G}/G)$ and acts on $A_{0}$ by translation. Thus, giving a full classification of these fibrations is reduced to studying the actions of $\Delta$ on $P_{G}/G$ which are induced by translation by elements of $A_{0}\cap P_{G}$ on $P_{G}$. In other words, we need to understand the commutative diagram
			\begin{equation}\label{fibraenfibra1}
				\xymatrix{ P_{G} \ar[r]^{t} \ar[d]_{G} & P_{G} \ar[d]^{G} \\ P_{G}/G \ar[r]_{t_{G}} & P_{G}/G ,}
			\end{equation}
where $t$ is any translation by a non zero element of $A_{0}\cap P_{G}$ and $t_{G}$ is the induced morphism on the quotient. Notice that $A_{0}\cap P_{G}$ acts faithfully on $P_{G}/G$, and particularly, if $A_{0}\cap P_{G}\not=\{0\}$, the fibration $A/G\to A/P_{G}$ is not trivial. Then, giving a classification of the fibration $A/G\to A/P_{G}$ is reduced to studying faithful representations of $\Delta$ in $\Aut(P_{G}/G)$, where $P_{G}/G$ is a product of projective spaces. In the case when the analytic representation of $G$ on $P_{G}$ is irreducible the problem is reduced to studying faithful representations of $\Delta$ in $\PGL_{n+1}(\C)$, which are easier to understand than the representations of $\Delta$ in $\Aut(\P^{n_{1}}\times\cdots\times \P^{n_{r}})$. In Section \ref{casoirreducible} we study the case when the analytic representation of $G$ on $P_{G}$ is irreducible. Now, let us prove that the general case can be reduced to the irreducible case. As $\dim (P_{G}^G)=0$ and $P_{G}/G$ is smooth, by Theorem \ref{descomposicionP_G}, the abelian subvariety $P_{G}$ and the group $G$ can be decomposed. Then, there exist abelian subvarieties $P_{1},\dots P_{r}\subset P_{G}$ and subgroups $G_{1},\dots,G_{r}\leq \Aut_{0}(A)$ such that $P_{G}\cong P_{1}\times\cdots\times P_{r}$, $G\cong G_{1}\times\cdots\times G_{r}$ and 
	\[P_{G}/G\cong P_{1}/G_{1}\times\cdots \times P_{r}/G_{r}.\]
	Furthermore, the analytic representation of $G_{i}$ on $P_{i}$ is irreducible and, by Theorem \ref{maintheorempaper}, $P_{i}/G_{i}\cong \P^{n_{i}}$. Thus, the morphism 
		\begin{align*}
			A_{0}\times P_{1}\times \cdots \times P_{r} &\to A; \\
				(a,b_{1},\dots,b_{r}) &\mapsto a-(b_{1}+\dots+b_{r}).
		\end{align*} 
is an isogeny of kernel $\Delta$. Let us fix an inclusion $\iota:\Delta\to A_{0}$ given by $(a,b_{1},\dots,b_{r})\mapsto a$ and denote by $\Delta_{0}$ the image $\iota(\Delta)$. This morphism is well defined and is an inclusion, because for all $a\in A_{0}\cap P_{G}$ there are unique $b_{i}\in P_{i}$ such that $a=b_{1}+\dots+b_{r}$. Moreover, we have $\Delta_{0}=A_{0}\cap P_{G}$.

	The fibration $A/G\to A/P_{G}$ is trivialized by $A_{0}\times \P^{n_{1}}\times\cdots\times \P^{n_{r}}\to A_{0}$ as in the following diagram
			\begin{equation*}
				\xymatrix{ A_{0}\times P_{1}\times\cdots\times P_{r} \ar[rr]^{/\Delta} \ar[d]_{/G} & & A \ar[d]^{/G}  \\ 
					A_{0}\times \P^{n_{1}}\times\cdots\times \P^{n_{r}} \ar[d] \ar[rr]^{/\Delta} & & A/G \ar[d]  \\
					A_{0} \ar[rr]^{/\Delta_{0}} & & A/ P_{G}.}
			\end{equation*}
		
	Our purpose is understand to the morphism $A_{0}\times \P^{n_{1}}\times\cdots\times \P^{n_{r}} \to A/G$ in order to prove Theorem \ref{maintheorem2}:

	\begin{proof}[Proof of Theorem \ref{maintheorem2}]
	 Let $\mathcal{P}_{i}:=\prod_{k\in\{1,\dots,r\}-\{i\}}P_{k}$ be the product of the $P_{k}$ with $k\not=i$, which can be considered as an abelian subvariety of $P_{G}$, hence of $A$. Denote $B_{i}:=A/\mathcal{P}_{i}$. The abelian variety $\mathcal{P}_{i}$ acts on $A_{0}\times P_{1}\times\cdots\times P_{r}$ by translation on each coordinate. This action commutes with the action of $\Delta$. Hence, we have the commutative diagram
			\[\xymatrix{A_{0}\times P_{1}\times\cdots\times P_{r} \ar[rr]^-{/\Delta} \ar[d]_{/\mathcal{P}_{i}} & & A \ar[d]^{/\mathcal{P}_{i}} \\
					A_{0}\times P_{i} \ar[rr]^{} & & B_{i} .}\]

	Denote by $\Delta_{i}$ the kernel of the morphism $A_{0}\times P_{i} \to B_{i}$. As $\Delta\cap \mathcal{P}_{i}=0$, we have $\Delta_{i} \cong \Delta$. Therefore, the morphism $A_{0}\times P_{i}\to B_{i}$ is an isogeny. Moreover, the restriction to $P_{i}$ is an injection, because $P_{i}\cap \mathcal{P}_{i}=0$. The group $G_{i}$ acts on $B_{i}$ and this action is faithful. Denote by $B_{i,0}$ the connected component of $B_{i}^{G_{i}}$ that contains 0 and $P_{G_{i}}$ the complementary abelian subvariety by a $G_{i}$-invariant polarization. By Proposition 2.9 of \cite{ALA} there exists a fibration $B_{i}/G_{i}\to B_{i}/P_{G_{i}}$. On the one hand, the image of $A_{0}\times\{0\}$ in $B_{i}$ is contained in $B_{i,0}$, and in fact the image is equal to $B_{i,0}$. This gives a surjective morphism $A_{0}\to B_{i,0}/(B_{i,0}\cap P_{G_{i}})\cong B_{i}/P_{G_{i}}$. On the other hand, the image of $\{0\}\times P_{i}$ is contained in $P_{G_{i}}$. Since $P_{i}$ is injected into $P_{G_{i}}$, both are connected and have the same dimension, they are isomorphic. Hence, $P_{G_{i}}/G_{i}\cong P_{i}/G_{i}\cong \P^{n_{i}}$ is smooth and, by Proposition 2.9 in \cite{ALA}, the quotient $B_{i}/G_{i}$ is smooth. Then, we have the commutative diagram
					\begin{equation}\label{trivetale}
						\xymatrix{ A_{0}\times P_{i} \ar[r]^{/\Delta_{i}} \ar[d]_{/G_{i}} & B_{i} \ar[d]^{/G_{i}}  \\ 
					A_{0}\times \P^{n_{i}} \ar[d] \ar[r]^{/\Delta_{i}} & B_{i}/G_{i} \ar[d]  \\
					A_{0} \ar[r]^{} & B_{i}/P_{G_{i}},}
					\end{equation}
where all the morphisms are surjective. We claim that the kernel of the morphism $A_{0}\to B_{i}/P_{G_{i}}$ is $\Delta_{0}$. On the one hand, let $f:A_{0}\times P_{i}\to B_{i}$ be the morphism given by diagram \eqref{trivetale} and $a\in A_{0}$  such that $f(a,0)=\overline{0}\in B_{i,0}/(B_{i,0}\cap P_{i})$. Then, $f(a,0)\in B_{i,0}\cap P_{i}$ and there exist $(0,-b_{i})\in \{0\}\times P_{i}$ such that $f(0,-b_{i})=f(a,0)$, so $f(a,b_{i})=0$. This implies that $(a,b_{i})\in \Delta_{i}$, so $a\in \Delta_{0}$. On the other hand, if $a\in \Delta_{0}$ there exits $(b_{1},b_{2},\dots,b_{r})\in \prod_{k=1}^{n} P_{k}$ such that $a=b_{1}+b_{2}+\cdots+b_{r}$. In particular, $a-b_{i}\in \mathcal{P}_{i}$, hence $f(a,b_{i})=0$ in $B_{i}$ and $a\in \ker(f)$. Then, the kernel of the morphism $A_{0}\to B_{i}/P_{G_{i}}$ is $\Delta_{0}$ and \[B_{i,0}/(B_{i,0}\cap P_{i})\cong A_{0}/\Delta_{0},\] hence we can see $B_{i}/G_{i}$ as a fibration over $A_{0}/\Delta_{0}$ and therefore, since $\Delta_{0}=A_{0}\cap P_{G}$,  as a fibration over $A/P_{G}$ and, in particular, \[ B_{i}/P_{G_{i}}\cong A/P_{G}. \] \\

	So far, we have proved the three first statements of Theorem \ref{maintheorem2}. We need to prove the existence of the isomorphism \[ A/G\cong B_{1}/G_{1}\underset{A/P_{G}}{\times}\cdots\underset{A/P_{G}}{\times} B_{r}/G_{r}.\] The morphisms $A\to B_{i}$ induce morphisms $A/G\to B_{i}/G$. By Theorem \ref{descomposicionP_G}, the group $G_{j}$ acts trivially over $B_{i}$ when $j\not=1$, therefore $B_{i}/G=B_{i}/G_{i}$. The quotient $B_{i}/G_{i}$ is smooth because its fibers are isomorphic to $\P^{n_{i}}$, then the pairs $(B_{i},G_{i})$ satisfy the conditions of Theorem \ref{maintheorem}. Therefore, we have the commutative diagram given by the morphisms $A/G\to B_{i}/G_{i}$ and the vertical isomorphisms given by the diagram \eqref{trivetale} 

				\[\xymatrix@=1pt{
					     (A_{0}\times \P^{n_{1}}\times\cdots \times\P^{n_{r}})/\Delta \ar[rrr] \ar[rdd] \ar[ddd]_{\cong} \ar[drr] & & & (A_{0}\times \P^{n_{r}})/\Delta_{r} \ar[rdd] \ar@{->}'[dd]_{\cong}[ddd] & \\
					     & & \ar@{}[ru]|*=[@]{\hdots} (A_{0}\times\P^{n_{i}})/\Delta_{i} \ar@{}[dl]|*=[@]{\hdots} \ar[drr] \ar@{->}'[d]_{}[ddd] & \\
					     & (A_{0}\times \P^{n_{1}})/\Delta_{1} \ar[rrr] \ar[ddd]_{} & & & A_{0}/\Delta_{0} \ar[ddd]^{\cong} \\
					     A/G \ar@{->}'[rr][rrr] \ar[rdd] \ar[drr] & & & B_{r}/G_{r} \ar[rdd] & \\
					     & & \ar@{}[ru]|*=[@]{\hdots} B_{i}/G_{i} \ar@{}[dl]|*=0[@]{\hdots} \ar[drr] & \\
					     & B_{1}/G_{1} \ar[rrr] & & & A/P_{G} .}\]
Thus, giving a proof of the fact that $A/G$ is the fibered product of the $B_{i}/G_{i}$ over $A/P_{G}$ is equivalent to proving that $(A_{0}\times \P^{n_{1}}\times\cdots \times\P^{n_{r}})/\Delta$ is the fibered product of the $(A_{0}\times \P^{n_{i}})/\Delta_{i}$ over $A_{0}/\Delta_{0}$. Let us prove the second assertion. Let $Z$ be a variety with morphisms $q_{i}:Z\to (A_{0}\times \P^{n_{i}})/\Delta_{i}$ such that the following diagram commutes for all $i,j\in\{1,\dots,r\}$:

				\begin{equation}\label{prodfib}
					\xymatrix{ Z \ar@/_1pc/[ddr]_{q_{i}}  \ar@/^/[drr]^{q_{j}} \ar@{.>}[dr]|-{\Psi}& \\  & (A_{0}\times\P^{n_{1}}\times\cdots\times\P^{n_{r}})/\Delta_{} \ar[d]^-{p_{i}} \ar[r]_-{p_{j}} & \ar[d]_-{\varphi_{j}} (A_{0}\times\P^{n_{j}})/\Delta_{j} \\ & (A_{0}\times\P^{n_{i}})/\Delta_{i} \ar[r]^-{\varphi_{i}} & A_{0}/\Delta_{0}. }
				\end{equation}

	We need to prove the existence and the uniqueness of $\Psi$. Let us prove first the existence. Let $z\in Z$ and $q_{i}(z)=[(x_{i},\alpha_{i})]_{\Delta_{i}}$ with $\alpha_{i}\in\P^{n_{i}}$, where $[(x_{i},\alpha_{i})]_{\Delta_{i}}$ is the class of $(x_{i},\alpha_{i})$ modulo $\Delta_{i}$. Since for all $i,j\in\{1,\dots,r\}$
				\[\varphi_{i}q_{i}(z)=\varphi_{j}q_{j}(z),\]
we have for all $i\in\{2,\dots,r\}$
		\[\varphi_{1}q_{1}(z)=\varphi_{1}\left( [(x_{1},\alpha_{1})]_{\Delta_{1}} \right)=[x_{1}]_{\Delta_{0}}=[x_{i}]_{\Delta_{0}}=\varphi_{i}\left( [(x_{i},\alpha_{i})]_{\Delta_{i}} \right)=\varphi_{i}q_{i}(z).\]
	This implies that there exists $a_{i}\in\Delta_{0}$ such that $a_{i}*x_{1}=x_{i}$. Since $a_{i}\in \Delta_{0}=A_{0}\cap P_{G}$, there exists a unique $b_{ii}\in P_{i}$ such that $(a_{i},b_{ii})\in \Delta_{i}$. Then, there exists $\alpha'_{i}\in\P^{n_{i}}$ such that $(a_{i},b_{i,i})*[\left(x_{1},\alpha'_{i}\right)]_{\Delta_{i}}=[\left(x_{i},\alpha_{i}\right)]_{\Delta_{i}}$. Then, we have for all $i\in\{2,\dots,r\}$
				\[q_{i}(z)=[(x_{1},\alpha'_{i})]_{\Delta_{i}}.\] This allows us to define the morphism $\Psi:Z\to (A_{0}\times\P^{n_{1}}\times\cdots\times\P^{n_{r}})/\Delta$ given by
						\[ z\mapsto [(x_{1},\alpha'_{1},\dots,\alpha'_{r})]_{\Delta}, \]
where $\alpha_{1}=\alpha'_{1}$, which makes diagram \eqref{prodfib} commute for all $i,j\in\{1,\dots,r\}$. Let us prove the uniqueness. Let $\overline{\Psi}:Z\to(A_{0}\times\P^{n_{1}}\times\dots\times\P^{n_{r}})/\Delta$ be another morphism that makes diagram \eqref{prodfib} commute for all $i,j\in\{1,\dots,r\}$. Let $z\in Z$ and $\overline{\Psi}(z)=[(x,\beta_{1},\dots,\beta_{r})]_{\Delta}$ for any $(x,\beta_{1},\dots,\beta_{r})\in A_{0}\times \P^{n_{1}}\times\cdots\times\P^{n_{r}}$. Then,
						\[p_{i}\circ\overline{\Psi}(z)=p_{i}([(x,\beta_{1},\dots,\beta_{r})]_{\Delta})=[(x,\beta_{i})]_{\Delta_{i}}=[(x_{1},\alpha'_{i})]_{\Delta_{i}}=q_{i}(z),\]
because $p_{i}\circ\overline{\Psi}=q_{i}$. This implies that there exists $(a_{i},b_{i})\in \Delta_{i}$ such that $(a_{i},b_{i})*(x_{1},\alpha'_{i})=(x,\beta_{i})$. Since the action of $\Delta_{i}$ on the first coordianate is by translations, we have for all $i\in\{1,\dots,r\}$ that $x=x_{1}+a_{i}$ and hence there exists $a\in A_{0}\cap P_{G}$ such that $a=a_{i}$ for all $i\in\{1,\dots,r\}$. Then, as the decomposition of $a$ in $P_{G}$ is unique, we have $a=b_{1}+\cdots+b_{r}$. Then, we have 
						\[(a,b_{1},\dots,b_{r})*(x_{1},\alpha'_{1},\dots,\alpha'_{r})=(x,\beta_{1},\dots,\beta_{r}),\]
which implies that $[(x_{1},\alpha'_{1},\dots,\alpha'_{r})]_{\Delta}=[(x,\beta_{1},\dots,\beta_{r})]_{\Delta}$, and hence the uniqueness of $\Psi$ because
				\[\Psi(z)=[(x_{1},\alpha'_{1},\dots,\alpha'_{r})]_{\Delta}=[(x,\beta_{1},\dots,\beta_{r})]_{\Delta}=\overline{\Psi}(z).\]
Therefore, there exists a unique morphism $\Psi:Z\to A/G$ such that for all  $i,j\in\{1,\dots,r\}$ the following diagram commutes
				\[\xymatrix{ Z \ar@/_/[ddr]_{q_{i}}  \ar@/^/[drr]^{q_{j}} \ar[dr]|-{\Psi}& \\  & A/G \ar[d]^-{p_{i}} \ar[r]_-{p_{j}} & \ar[d]_-{\varphi_{j}} B_{j}/G_{j} \\ & B_{i}/G_{i} \ar[r]^-{\varphi_{i}} & A/P_{G}. }\]					 
	This proves that
				\[A/G\cong B_{1}/G_{1}\underset{A/P_{G}}{\times}\cdots\underset{A/P_{G}}{\times} B_{r}/G_{r}.\] Finally, we have proved Theorem \ref{maintheorem2}.

	\end{proof}

\begin{remark}
	The trivialization $A_{0}\times \P^{n_{i}}\to A_{0}$ of $B_{i}/G_{i}\to A/P_{G}$ given by diagram \eqref{trivetale} does not correspond to the one given in Theorem \ref{maintheorem}. That trivialization corresponds to the right square of the commutative diagram.
		\[ \xymatrix{A_{0}\times\P^{n_{i}} \ar[r] \ar[d] & B_{i,0}\times \P^{n_{i}} \ar[r] \ar[d] & B_{i}/G_{i} \ar[d] \\
					A_{0} \ar[r]^{} & B_{i,0} \ar[r] & B_{i,0}/(B_{i,0}\cap P_{G_{i}})\cong A/P_{G}.}\]
\end{remark}


\section{Irreducible case}\label{casoirreducible}

\indent As in the last section, $A$ will be an abelian variety, $G$ a finite subgroup of $\Aut_{0}(A)$ such that $A/G$ is smooth, $A_{0}$ the connected component of $A^{G}$ that contains $0$ and $P_{G}$ the complementary abelian subvariety of $A_{0}$ with respect to a $G$-invariant polarization. In this section we will assume that the analytic representation of $G$ in $P_{G}$ is irreducible. In this case, by Theorem \ref{maintheorempaper}, we know that $P_{G}/G\cong \P^{n}$. Diagram \eqref{trivializacion}, then becomes
	
	\[\xymatrix{ A_{0}\times P_{G} \ar[rr]^{/\Delta} \ar[d]_{/G} & & A \ar[d]^{/G} \\ A_{0}\times \P^{n} \ar[rr]_{/\Delta} \ar[d] & & A/G \ar[d] \\ A_{0} \ar[rr]_{/\Delta} & & A/P_{G}. }\]
And the study of the relation between the fibers is reduced to diagram \eqref{fibraenfibra1}, which becomes
	\begin{equation}\label{P_{G}afibra}
		\xymatrix{ P_{G} \ar[r]^{t} \ar[d] & P_{G} \ar[d] \\ \P^{n} \ar[r]_{t_{G}} & \P^{n}, }
	\end{equation}
where $t$ is a translation by an element of $A_{0}\cap P_{G}$ and $t_{G}$ is the induced function on the fibers that makes the diagram commute. The subvariety $P_{G}$ and the group $G$, by Theorem \ref{maintheorempaper}, are classified as pairs $(P_{G},G)$.
	\begin{definition}\label{tiposfibrados}
	Let $P_{G}$ and $G$ be as in Theorem \ref{maintheorempaper}. The pairs $(P_{G},G)$ satisfying \ref{theoremalpha}) will be called of \textit{type $\alpha$}, those satisfaying \ref{theorembeta}) will be called of \textit{type $\beta$} and those satisfying \ref{theoremgamma}) will be called of \textit{type $\gamma$}.
	\end{definition}

	In what follows we treat the three cases separately.


\subsection{Classification of the fibers with $(P_{G},G)$ of type $\alpha$}\label{tipoalpha}

\indent If $(P_{G},G)$ is a pair of type $\alpha$, the abelian variety $P_{G}$ is isomorphic to the self-product of an elliptic curve $E$ and $G$ is isomorphic to $C^{n}\rtimes S_{n}$, with $n=\dim(P_{G})$ and $C$ a non-trivial subgroup of $\Aut(E)$, and therefore a cyclic group of order $2,3,4$ or 6. The action of $C^{n}$ is coordinatewise and the action of $S_{n}$ permutes them.

The main result of this section is the following.
	\begin{proposition}\label{clasificaciontipoalpha}
		Let $A$ be an abelian variety and $G$ be a finite subgroup of $\Aut_{0}(A)$ such that $A/G$ is smooth. Let $A_{0}$ be the connected component of $A^{G}$ that contains 0 and $P_{G}$ be the complementary abelian subvariety with respect to a $G$-invariant polarization. If the analytic representation of $G$ on $P_{G}$ is irreducible and $(P_{G},G)$ is a pair of type $\alpha$, there exists a trivialization
		
		\[\xymatrix{ A_{0}\times \P^{n} \ar[rr]^{/\Delta} \ar[d] & & A/G \ar[d] \\ A_{0} \ar[rr]_{/\Delta} & & A/P_{G},  }\]
of the fibration $A/G\to A/P_{G}$, with $\Delta\cong A_{0}\cap P_{G}$, where $\Delta$ acts on $A_{0}$ by translations and on the fibers as one of the following:

		\begin{enumerate}
			\item \label{alpha1} $\Delta\cong\{0\}$ and therefore acts trivially.
			\item \label{alpha2} $\Delta\cong \Z/2\Z$ and acts with generator $\delta:[z_{0}:\cdots:z_{n}]\mapsto[z_{0}:-z_{1}:\cdots:(-1)^{n}z_{n}]$.
			\item \label{alpha3} $\Delta\cong \Z/3\Z$ and acts with generator $\delta:[z_{0}:\cdots:z_{n}]\mapsto[z_{0}:\zeta_{3}z_{1}:\cdots:\zeta_{3}^{n}z_{n}]$.
			\item \label{alpha4} $\Delta\cong \left(\Z/2\Z\right)^{2}$ and there exists a set of generators that act as
						\begin{align*} 
							\delta:[z_{0}:\cdots:z_{n}] &\mapsto [z_{0}:\cdots:(-1)^{n}z_{n}], \\
							\delta':[z_{0}:\cdots:z_{n}] &\mapsto [z_{n}:z_{n-1}:\cdots:z_{1}:z_{0}].
						\end{align*}
						
		\end{enumerate}
	\end{proposition}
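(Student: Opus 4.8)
The plan is to unwind the diagrams \eqref{trivializacion} and \eqref{fibraenfibra1} of Section \ref{reduccionalcasoirreducible} in the type-$\alpha$ situation. Once the trivialization $A_{0}\times\P^{n}\to A/G$ coming from \eqref{trivializacion} is fixed, everything reduces to identifying, for each $a\in\Delta=A_{0}\cap P_{G}$, the automorphism of $P_{G}/G\cong\P^{n}$ induced by translation by $a$ on $P_{G}$. First I would pin down $\Delta$. Since $A_{0}\subseteq A^{G}$, every element of $A_{0}\cap P_{G}$ is a $G$-fixed point of $P_{G}$, so $\Delta\subseteq P_{G}^{G}$, which is finite as $\dim(P_{G}^{G})=0$. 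Writing $P_{G}\cong E^{n}$ and $G\cong C^{n}\rtimes S_{n}$ with $C=\langle c\rangle$ cyclic of order $2,3,4$ or $6$, a tuple in $E^{n}$ is $S_{n}$-fixed iff constant and $C^{n}$-fixed iff each coordinate lies in $E^{C}=\ker(c-\id)$, so $P_{G}^{G}\cong E^{C}$. Computing the degree of the isogeny $c-\id\colon E\to E$ — namely $1,2,3$ or $4$ according as $|C|=6,4,3$ or $2$ — one finds $E^{C}$ is trivial, $\Z/2\Z$ or $\Z/3\Z$ in the first three cases, while for $|C|=2$ the isogeny is multiplication by $-2$ and $E^{C}=E[2]\cong(\Z/2\Z)^{2}$. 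Hence $\Delta$ is trivial (case \ref{alpha1}), cyclic of order $2$ or $3$ (the candidates for cases \ref{alpha2}, \ref{alpha3}), or, only when $|C|=2$, equal to $E[2]$ (case \ref{alpha4}).

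Next I would make the quotient explicit: $P_{G}/G=E^{n}/(C^{n}\rtimes S_{n})=\operatorname{Sym}^{n}(E/C)$, and since $E/C\cong\P^{1}$ (classically, or the $n=1$ case of Theorem \ref{maintheorempaper}) I identify $P_{G}/G\cong\operatorname{Sym}^{n}\P^{1}\cong\P^{n}$ by sending $[z_{0}:\cdots:z_{n}]$ to the degree-$n$ binary form $\sum_{k}z_{k}U^{n-k}V^{k}$ together with its zero divisor. Under this identification an automorphism of $\P^{1}=E/C$ acts on $\P^{n}$ by push-forward of divisors, which is the linear substitution on the $z_{k}$ coming from the inverse substitution in $(U,V)$. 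For $0\neq a=(x,\dots,x)\in\Delta$, translation by $a$ is diagonal translation by $x\in E^{C}$ on the factors $E$; it descends to an automorphism $\sigma_{x}$ of $E/C$ whose order equals that of $x$ (if $\sigma_{x}^{k}=\id$, evaluate on the class of $0$ to get $kx=0$).

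Then I would normalize and compute. In the cyclic cases $\sigma_{x}$ has order $m\in\{2,3\}$, and a finite-order element of $\PGL_{2}(\C)$ is diagonalizable, so I may choose the isomorphism $E/C\cong\P^{1}$ — and, if needed, the generator of $\Delta$ and the primitive root $\zeta_{m}$ — so that $\sigma_{x}$ is $[U:V]\mapsto[U:\zeta_{m}V]$; the diagonal action then descends on $\operatorname{Sym}^{n}\P^{1}=\P^{n}$ to $[z_{0}:\cdots:z_{n}]\mapsto[z_{0}:\zeta_{m}z_{1}:\cdots:\zeta_{m}^{n}z_{n}]$, giving case \ref{alpha2} ($m=2$) and case \ref{alpha3} ($m=3$). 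For case \ref{alpha4}, $C=\{\pm1\}$ and $\Delta=E[2]$; choosing generators $x_{0},x_{1}$ of $E[2]$ gives two commuting involutions $\sigma_{0},\sigma_{1}$ of $\P^{1}=E/C$. This is the one point needing a genuine argument: after conjugating so that $\sigma_{0}$ is $z\mapsto -z$ with fixed points $0,\infty$, commutativity forces $\sigma_{1}$ to preserve $\{0,\infty\}$, and being an involution distinct from $\sigma_{0}$ it must swap them, $\sigma_{1}(z)=b/z$; conjugating further by $z\mapsto\sqrt{b}\,z$, which commutes with $\sigma_{0}$, brings $\sigma_{1}$ to $z\mapsto 1/z$. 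Pushing forward degree-$n$ divisors then turns $\sigma_{0}\colon[U:V]\mapsto[U:-V]$ and $\sigma_{1}\colon[U:V]\mapsto[V:U]$ into $[z_{0}:\cdots:z_{n}]\mapsto[z_{0}:-z_{1}:\cdots:(-1)^{n}z_{n}]$ and $[z_{0}:\cdots:z_{n}]\mapsto[z_{n}:z_{n-1}:\cdots:z_{0}]$, i.e. the generators $\delta,\delta'$ of case \ref{alpha4}; one checks in passing that these commute in $\PGL_{n+1}(\C)$ up to the harmless global scalar $(-1)^{n}$, as they must since $\Delta$ is abelian.

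The routine steps are the translation of automorphisms of $\P^{1}=E/C$ into linear substitutions on binary forms and the degree computation for $E^{C}$. The only real obstacle is the $(\Z/2\Z)^{2}$ case: two commuting involutions of $\P^{1}$ must be put into standard form simultaneously — equivalently, one uses that $\PGL_{2}(\C)$ has a single conjugacy class of Klein four-subgroups — and one must bookkeep the roots of unity and the choice of generator of a cyclic $\Delta$ carefully so that the formulas in \ref{alpha2}, \ref{alpha3} and \ref{alpha4} come out exactly as stated.
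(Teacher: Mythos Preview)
Your proposal is correct and follows essentially the same route as the paper: both identify $\Delta\subseteq P_{G}^{G}\cong E^{C}$, compute $E^{C}$ case by case according to $|C|\in\{2,3,4,6\}$, observe that translation by $(x,\dots,x)$ descends to a diagonal automorphism of $E/C\cong\P^{1}$, normalize this automorphism in $\PGL_{2}(\C)$, and then compute the induced action on $\P^{n}\cong\operatorname{Sym}^{n}\P^{1}$. The only differences are cosmetic: the paper writes the map $(\P^{1})^{n}\to\P^{n}$ via elementary symmetric polynomials where you use binary forms, and the paper cites Beauville's classification of finite subgroups of $\PGL_{2}(\C)$ for the conjugacy step where you argue directly (diagonalizability in the cyclic case, an explicit normalization of the two commuting involutions in the Klein case).
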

	
	\begin{proof}

	The fixed points of $P_{G}$ by the action of $G$ are
				\[P_{G}^{G}\cong\{(x,\dots,x)\in E^{n} \mid x\in E^{C}\}\cong E^{C}.\]
	
	Let $\Delta$ be a non trivial subgroup of $P_{G}^{G}$. Recall that we want to study the induced morphism $t_{G}$ between the fibers as in diagram \eqref{P_{G}afibra}
		\[\xymatrix{ P_{G} \ar[r]^{t} \ar[d] & P_{G} \ar[d] \\ \P^{n} \ar[r]_{t_{G}} & \P^{n} }\]
where $t$ is a translation by an element of $\Delta$. The morphism $P_{G}\to\P^{n}$ can be decomposed as 
	\[\xymatrix{P_{G}\ar[r]^{/C^{n}} & (\P^{1})^{n}\ar[r]^{/S_{n}} & \P^{n}},\] 
where the function $(\P^{1})^{n}\mapsto\P^{n}$ is given by 
		\begin{align*}
				g:(\P^{1})^{n} &\mapsto \P^{n}; \\
			\left([x_{1}:y_{1}],\dots,[x_{n}:y_{n}]\right) &\to [Y^{n-i}p_{i}(x_{1}Y_{1},\dots,x_{n}Y_{n})]_{i}, 
		\end{align*}
with $Y=y_{1}\cdots y_{n}$, $Y_{i}=Y/y_{i}$, $p_{0}\equiv 1$ and 
		\[ p_{i}(z_{1},\dots,z_{n})=\sum_{k_{1}<\cdots<k_{i}}z_{k_{1}}\cdots z_{k_{i}}. \]

	Since the action of $\Delta$ on $\P^{n}$ is faithful, the action induced on $(\P^{1})^{n}$ is too. This reduces the problem to studying the commutative diagram
		\[\xymatrix{ (\P^{1})^{n} \ar[r]^{\overline{t}} \ar[d]_{g} & (\P^{1})^{n} \ar[d]^{g} \\ \P^{n} \ar[r]_{t_{G}} & \P^{n}, }\] 
where $\overline{t}$ is the morphism induced by $t$ over $(\P^{1})^{n}$. Since $t$ translates by the same element in each coordinate, we can see $\overline{t}$ as an element of $\Aut(\P^{1})$ acting in a diagonal way on $(\P^{1})^{n}$. Since the action of $\Delta$ on $(\P^{1})^{n}$ is faithful and diagonal, the induced action of $\Delta$ on $\P^{1}$ is faithful. Hence, the group $\Delta$ is isomorphic to a subgroup of $\Aut(\P^{1})=\PGL_{2}(\C)$. By Proposition 4.1 in  \cite{beauville2009finite}, two isomorphic subgroups of $\PGL_{2}(\C)$ are conjugate. Therefore, it is enough to study one arbitrary representation of $\Delta$ in $\PGL_{2}(\C)$.

	When $\Delta$ has order 1, the fibration is the trivial one. Then, we assume $\Delta$ non trivial. 
	
	If the order of $C$ is $2$, then $P_{G}^{G}\cong E[2]$ and $E[2]\cong \left(\Z/2\Z\right)^{2}$. Therefore, $\Delta$ has order $2$ or $4$. When the order of $\Delta$ is $2$, we can consider
				\[\Delta\cong\left\langle\begin{pmatrix} -1 & 0 \\ 0 & 1\end{pmatrix}\right\rangle,\]
in $\PGL_{2}(\C)$. Let $\overline{t}$ be the non trivial translation of $\Delta$. We are looking for $t_{G}\in\Aut(\P^{n})$ such that $t_{G}g=g\overline{t}$. A direct computation gives $t_{G}:\P^{n}\to\P^{n}$ given by $[z_{0}:\cdots:z_{n}]\to[z_{0}:-z_{1}:\cdots:(-1)^{n}z_{n}]$. Then, if $\Delta$ has order 2 
					\[\Delta \cong \left\langle\delta:[z_{0}:z_{1}:\cdots:z_{n}]\to[z_{0}:-z_{1}:\cdots:(-1)^{n}z_{n}]\right\rangle.\]
This is point \ref{alpha2} of Proposition \ref{clasificaciontipoalpha}.

	If $\Delta$ has order 4, and hence is isomorphic to the Klein group, we can consider 
					\[\Delta\cong\left\langle\begin{pmatrix} -1 & 0 \\ 0 & 1\end{pmatrix},\begin{pmatrix} 0 & 1 \\ 1 & 0\end{pmatrix}\right\rangle.\]
	Let $\overline{t},\overline{u}:P_{G}\to P_{G}$ be two translations by elements of $\Delta$ such that $\langle t, u\rangle\cong \Delta$. Then, up to a base change, we can set
			\[ t \to \begin{pmatrix} -1 & 0 \\ 0 & 1\end{pmatrix} \quad\textrm{ y }\quad u\to\begin{pmatrix} 0 & 1 \\ 1 & 0\end{pmatrix}. \]

	By the work done above we know that $t_{G}:\P^{n}\to \P^{n}$ is given by 
	\[[z_{0}:\cdots:z_{n}]\to[z_{0}:-z_{1}:\cdots:(-1)^{n}z_{n}].\] 
	Then, we just need to determine $u_{G}:\P^{n}\to\P^{n}$ such that $u_{G}g=gu$. A direct computation gives $u_{G}:\P^{n}\to \P^{n}$ given by $[z_{0}:\cdots:z_{n}]\to[z_{n}:z_{n-1}:\cdots:z_{1}:z_{0}]$. Then, up to a change of basis, the action of $\Delta$ on $\P^{n}$ satisfies
			\[ \Delta\cong\left\langle\delta:[z_{0}:\cdots:z_{n}] \to [z_{0}:-z_{1}:\cdots:(-1)^{n}z_{n}]\, , \,\delta':[z_{0}:\cdots:z_{n}] \to [z_{n}:z_{n-1}:\cdots:z_{1}:z_{0}] \right\rangle. \]
This is point \ref{alpha4} of Proposition \ref{clasificaciontipoalpha}.
			
	When the order of $C$ is $3$ there is only one possible elliptic curve, the one with an action of $\Z/6\Z$. In this case $E^{C}\cong \Z/3\Z$, because there are only three fixed points by the action of $C$, hence $\Delta\cong \Z/3\Z$. Then, we can consider 
			\[\Delta\cong\left\langle\begin{pmatrix} \zeta_{3} & 0 \\ 0 & 1\end{pmatrix}\right\rangle.\]

	Let $t$ be a translation induced by any non trivial element of $\Delta$ and $\overline{t}:(\P^{1})^{n}\to(\P^{1})^{n}$ the morphism induced by $t$. Then, up to a base change, we can set 
			\[t_{G} \to \begin{pmatrix} \zeta_{3} & 0 \\ 0 & 1\end{pmatrix}.\]
			
	 We are looking for $t_{G}$ such that $t_{G}\circ g= g\circ\overline{t}$. A direct computation gives $t_{G}:\P^{n}\to\P^{n}$ given by $[z_{0}:\cdots:z_{n}]\to[z_{0}:\zeta_{3}z_{1}:\cdots:\zeta_{3}^{n}z_{n}]$. Then,
				\[\Delta\cong\left\langle \delta:[z_{0}:\cdots:z_{n}]\to[z_{0}:\zeta_{3}z_{1}:\cdots:\zeta_{3}^{n}z_{n}] \right\rangle.\]
This is point \ref{alpha3} of Proposition \ref{clasificaciontipoalpha}.
				
	If $C$ has order $6$, the elliptic curve $E$ is the same as in the last case. But in this case there is only one fixed point, hence $\Delta\cong \{\id\}$ and we are done.

	If the order of $C$ is $4$, there are only two points on $E$ fixed by the action of $G$, hence $\Delta\cong\Z/2\Z$. This case has already been studied in the case when $C$ has order $2$.

	All these results prove Proposition \ref{clasificaciontipoalpha}, which corresponds to points \ref{caso1}, \ref{caso2}, \ref{caso3} and \ref{caso4} of Theorem \ref{maintheorem}.

	\end{proof}


\subsection{Classification of the fibers with $(P_{G},G)$ of type $\beta$}\label{tipobeta}

	The pairs $(P_{G},G)$ of type $\beta$ are composed of the abelian variety $P_{G}$ isomorphic to
					\[\{(x_{0},\dots,x_{n})\in E^{n+1} \mid x_{0}+\cdots+x_{n}=0\},\]
and the group $G$ isomorphic to $S_{n+1}$. 
	
	In this case the next proposition holds. 

\begin{proposition}\label{clasificaciontipobeta}
	Let $A$ be an abelian variety and $G$ a finite subgroup of $\Aut_{0}(A)$ such that $A/G$ is smooth. Let $A_{0}$ be the connected component of $A^{G}$ that contains 0 and $P_{G}$ be the complementary abelian subvariety with respect to a $G$-invariant polarization. If the analytic representation of $G$ on $P_{G}$ is irreducible and $(P_{G},G)$ is a pair of type $\beta$, there exists a trivialization 
		
		\[\xymatrix{ A_{0}\times \P^{n} \ar[rr]^{/\Delta} \ar[d] & & A/G \ar[d] \\ A_{0} \ar[rr]_{/\Delta} & & A/P_{G}  ,}\]
of the fibration $A/G\to A/P_{G}$, with $\Delta\cong A_{0}\cap P_{G}$, where $\Delta$ acts on $A_{0}$ by translations and there exists a set of generators such that $\Delta$ acts on the fibers as
				\begin{align*} 
					\delta:[z_{0}:\cdots:z_{n}] &\mapsto [z_{0}:\zeta_{n+1}^{a}z_{1}:\cdots:\zeta_{n+1}^{an}z_{n}], \\
					\delta':[z_{0}:\cdots:z_{n}] &\mapsto [z_{n+1-b}:\cdots:z_{n-1}:z_{n}:z_{0}:\cdots:z_{n-b}],
				\end{align*}
with $a$ and $b$ positive integers, $b\lvert (n+1)$ and $a\lvert b$, where $z_{-1}:=z_{n}$.
\end{proposition}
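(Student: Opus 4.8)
The plan is to mimic the strategy of the type-$\alpha$ case: reduce the computation of the induced automorphism $t_G$ of $\mathbb{P}^n$ to a question about an automorphism of the elliptic curve $E$ acting by translation, then determine all faithful representations of $\Delta \cong E[n+1] \cap (\text{diagonal})$ that can arise. First I would identify $P_G^G$: since $P_G = \{(x_0,\dots,x_n)\in E^{n+1} : \sum x_i = 0\}$ and $G = S_{n+1}$ acts by permutation, the fixed locus consists of the constant tuples $(x,\dots,x)$ with $(n+1)x = 0$, so $P_G^G \cong E[n+1]$, and hence $\Delta$ is (isomorphic to) a subgroup of $E[n+1] \cong (\mathbb{Z}/(n+1)\mathbb{Z})^2$. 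This already explains the ambient group $(\mathbb{Z}/(n+1)\mathbb{Z})^2$ appearing in the statement.

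Next I would make explicit the quotient map $P_G \to P_G/G \cong \mathbb{P}^n$. The natural choice is to use the elliptic-curve analogue of symmetric functions: an isomorphism $E^{n+1}/S_{n+1} \cong \mathbb{P}^n$ via the complete linear system attached to the divisor defining the group law (this is the classical presentation, also used in \cite{CKSchar}, \cite{CKS}), and then restrict to the subvariety $\sum x_i = 0$. Concretely, one embeds $\mathbb{P}^n$ so that a point $(x_0,\dots,x_n)$ maps to $[\,s_0 : s_1 : \cdots : s_n\,]$ built out of theta functions / symmetric sections; the key structural fact I need is how translation by a point $\varepsilon \in E[n+1]$ acts on these coordinates. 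Translation by a point of exact order $m \mid (n+1)$ permutes the sections cyclically up to scalars governed by the corresponding $m$-th root of unity, which is exactly the source of the two generators in the statement: a diagonal generator $\delta$ with eigenvalues the powers of $\zeta_{n+1}^a$, and a cyclic-shift generator $\delta'$ by a block of size $b$. I would then show that the translation action of a general element of $\Delta$ is a product of such, so that $\Delta$ is generated by one $\delta$ and one $\delta'$ of the stated shape; the divisibility constraints $b \mid (n+1)$ and $a \mid b$ should fall out of the requirement that $\langle \delta, \delta'\rangle$ be abelian of the correct order and that the representation be faithful (i.e. that $\delta, \delta'$ actually commute in $\mathrm{PGL}_{n+1}(\mathbb{C})$ forces a compatibility between $a$ and $b$, and $\delta'^{\,(n+1)/b} $ lying in the group generated by $\delta$ forces $b \mid (n+1)$).

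The main obstacle I expect is the explicit bookkeeping of the scalar factors (the "commutation cocycle") when translation by a torsion point is expressed in the chosen projective coordinates: one must pin down precisely which root of unity appears where, so that $\delta$ and $\delta'$ genuinely commute as elements of $\mathrm{PGL}_{n+1}(\mathbb{C})$ rather than merely in $\mathrm{GL}_{n+1}(\mathbb{C})$ up to a Heisenberg-type central extension. This is essentially the statement that the relevant subgroup of the theta group descends to a (non-lifting) abelian subgroup of $\mathrm{PGL}_{n+1}$, and getting the indices $a, b$ consistent with $a \mid b \mid (n+1)$ is exactly the content of that descent. Once the action of a single translation is computed, the passage to an arbitrary subgroup $\Delta \le E[n+1]$ is linear algebra over $\mathbb{Z}/(n+1)\mathbb{Z}$: choose a basis of $\Delta$ adapted to the cyclic-shift/diagonal decomposition (Smith normal form of the inclusion $\Delta \hookrightarrow (\mathbb{Z}/(n+1)\mathbb{Z})^2$), and read off $a$ and $b$ from the elementary divisors. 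I would close by checking faithfulness of the resulting $\mathrm{PGL}_{n+1}(\mathbb{C})$-representation, which holds because $A_0 \cap P_G$ acts faithfully on $P_G/G$ as already noted in Section \ref{reduccionalcasoirreducible}, and by invoking the conjugacy of such representations to normalize the generators to the displayed form.
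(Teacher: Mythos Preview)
Your proposal is correct and would work, but the paper takes a genuinely different route. You plan to compute the translation action directly on a theta-basis of $H^{0}(E,\mathcal{O}((n+1)[0]))$, invoking the classical Schr\"odinger/Heisenberg representation to read off the diagonal and cyclic-shift generators. The paper instead avoids all explicit theta computations: it first proves, working in the divisor model $|(n+1)[0]|$, that for any pair of generators $t,u$ of $P_{G}^{G}\cong E[n+1]$ the induced maps $t_{G},u_{G}\in\PGL_{n+1}(\C)$ each have exactly $n+1$ fixed points and that these fixed sets are disjoint (Lemma~\ref{Fix}); it then proves a purely linear-algebraic normal-form result (Proposition~\ref{zsobrenzcuadrado}) stating that any two commuting elements of $\PGL_{n+1}(\C)$ of order $n+1$ with $n+1$ fixed points each and disjoint fixed loci are simultaneously conjugate to the standard diagonal/shift pair. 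Your approach buys a direct, computational identification grounded in the classical theory of theta groups, at the cost of managing the scalar ambiguities you yourself flag; the paper's approach buys a clean, coordinate-free argument that never touches theta functions, at the cost of an auxiliary lemma on fixed divisors and an abstract $\PGL$ classification. The passage to proper subgroups via Smith normal form is essentially the same in both; note, though, that the constraints $a\mid b\mid(n+1)$ come straight from the structure of subgroups of $(\Z/(n+1)\Z)^{2}$, not from the commutation in $\PGL_{n+1}$ as you suggest.
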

	
	In \cite{auffarth2017note}, R. Auffarth proves that we can see the quotient $P_{G}/G$ as
			\[ \left\{ D \in \Div(E) \mid D\sim (n+1)[0], \, D\geq 0\right\}, \]
which is the complete linear system of $(n+1)[0]$. Then, the morphism $P_{G}\to \P^{n}\cong P_{G}/G$ is given by
			\begin{align*} 
P_{G} &\to \lvert (n+1)[0] \rvert; \\
				 (x_{0},\dots,x_{n}) &\mapsto [x_{0}]+\cdots+[x_{n}].
			\end{align*}

	On the other hand, the fixed points of $P_{G}$ by the action of $G$ are 
				\[P_{G}^{G}=\{(x,\dots,x)\in E^{n+1} \mid x\in E[n+1]\}\cong E[n+1].\]
	Let $(x,\dots,x),(y,\dots,y)\in P_{G}^{G}$ be two generators of $P_{G}^{G}$. Denote by $t$ and $u$ the translations by $(x,\dots,x)$ and $(y,\dots,y)$, respectively, in $P_{G}$. Let $t_{G}$ and $u_{G}$ be the induced morphism by $t$ and $u$ on the fibers. We can see these morphisms as automorphisms of $\P^{n}$, as in the diagram
	\[\xymatrix{ P_{G} \ar[rr]^{t,u} \ar[d]_{} & & P_{G} \ar[d]^{} \\ \lvert (n+1)[0]\rvert \ar[rr]_{t_{G},u_{G}} & &  \lvert (n+1)[0]\rvert
		      .}\]
\indent	The following results will tell us that there is only one representation of $\Delta=P_{G}^{G}$ in $\PGL_{n+1}(\C)$, up to a base change. 		      
	\begin{lemma}\label{Fix}
		Let $(x,\dots,x),(y,\dots,y)\in P_{G}^{G}$ be generators of $P_{G}^{G}$. Let $t$ and $u$ be the translations by $(x,\dots,x)$ and $(y,\dots,y)$, respectively. If $t_{G}$ and $u_{G}$ are the induced morphisms on the quotient $P_{G}/G$, there exist $x_{t},y_{u}\in E$ such that
				\[\Fix(t_{G})=\{[x_{t}+my]+[x_{t}+my+x]+\cdots+[x_{t}+my+nx] \mid m\in\{0,\dots,n\}\},\]
			and
				\[\Fix(u_{G})=\{[y_{u}+mx]+[y_{u}+mx+y]+\cdots+[y_{u}+mx+ny] \mid m\in\{0,\dots,n\}\}.\]
		Moreover, $\Fix(t_{G})\cap\Fix(u_{G})=\emptyset$.
	\end{lemma}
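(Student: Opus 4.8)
The plan is to make the induced maps $t_{G}$ and $u_{G}$ completely explicit on $P_{G}/G\cong|(n+1)[0]|$ and then read off their fixed loci. Under the identification $(x_{0},\dots,x_{n})\mapsto[x_{0}]+\cdots+[x_{n}]$ recalled above, the translation $t$ by $(x,\dots,x)$ commutes with the $S_{n+1}$-action and hence descends to the ``translation of divisors'' map $t_{G}\colon\sum_{i}[z_{i}]\mapsto\sum_{i}[z_{i}+x]$, and likewise $u_{G}\colon\sum_{i}[z_{i}]\mapsto\sum_{i}[z_{i}+y]$. Since $(x,\dots,x),(y,\dots,y)$ generate $P_{G}^{G}\cong E[n+1]\cong(\Z/(n+1)\Z)^{2}$, and a two-element generating set of $(\Z/(n+1)\Z)^{2}$ is automatically a basis, the elements $x,y$ form a $\Z/(n+1)\Z$-basis of $E[n+1]$; in particular each of $x,y$ has order exactly $n+1$ and $\langle x\rangle\cap\langle y\rangle=\{0\}$.

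First I would identify the fixed divisors of $t_{G}$. If $D=\sum_{i=0}^{n}[z_{i}]$ satisfies $t_{G}(D)=D$, then $D$ is an effective divisor of degree $n+1$ invariant under translation by $x$; as $x$ has order $n+1$, the group $\langle x\rangle$ acts freely on $E$ with every orbit of size $n+1$, so $D$ must consist of a single orbit, i.e. $D=D_{z}:=\sum_{j=0}^{n}[z+jx]$ for some $z\in E$, and $D_{z}$ depends only on the coset $z+\langle x\rangle$. Conversely every such $D_{z}$ is $t_{G}$-fixed, and $D_{z}$ lies in $|(n+1)[0]|$ precisely when $\sum_{j=0}^{n}(z+jx)=0$ in $E$, that is, when $(n+1)z=-\binom{n+1}{2}x$; note that both sides of this equation depend only on $z+\langle x\rangle$ since $\langle x\rangle$ is killed by $n+1$.

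Next I would count the admissible cosets. The set of $z\in E$ with $(n+1)z=-\binom{n+1}{2}x$ is a nonempty coset of $E[n+1]$ (nonempty because multiplication by $n+1$ is surjective on $E$), so its image in $E/\langle x\rangle$ is a coset of the subgroup $E[n+1]/\langle x\rangle$, of order $(n+1)^{2}/(n+1)=n+1$. Moreover the class $\overline{y}$ of $y$ lies in this subgroup and has order $n+1$ in it — for $ky\in\langle x\rangle$ forces $k\equiv 0\pmod{n+1}$ since $x,y$ is a basis — so $E[n+1]/\langle x\rangle=\langle\overline{y}\rangle$. Picking any $x_{t}\in E$ with $(n+1)x_{t}=-\binom{n+1}{2}x$, the $t_{G}$-fixed divisors are therefore exactly $D_{x_{t}+my}=[x_{t}+my]+[x_{t}+my+x]+\cdots+[x_{t}+my+nx]$ for $m\in\{0,\dots,n\}$, which is the asserted description of $\Fix(t_{G})$; the description of $\Fix(u_{G})$ follows by exchanging the roles of $x$ and $y$.

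Finally, for $\Fix(t_{G})\cap\Fix(u_{G})=\emptyset$: a divisor $D$ in the intersection would be effective of degree $n+1$ and invariant under translation by both $x$ and $y$, hence under the whole group $\langle x,y\rangle=E[n+1]$. But $E[n+1]$ acts freely on $E$ with all orbits of size $(n+1)^{2}$, so a nonzero effective $E[n+1]$-invariant divisor has degree at least $(n+1)^{2}>n+1$; since $\deg D=n+1$ and hence $D\neq 0$, this is a contradiction. I expect the only real obstacle to be the bookkeeping in the middle two steps — matching the sum-zero constraint with the equation $(n+1)z=-\binom{n+1}{2}x$ on cosets, and recognizing $E[n+1]/\langle x\rangle=\langle\overline{y}\rangle$ from the hypothesis that $(x,\dots,x),(y,\dots,y)$ generate $P_{G}^{G}$; everything else is free-action counting.
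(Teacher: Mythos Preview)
Your proof is correct and follows essentially the same route as the paper: identify $t_{G}$-fixed divisors as single $\langle x\rangle$-orbits, impose the sum-zero constraint $(n+1)z=-\binom{n+1}{2}x$, and parametrise the resulting $n+1$ solutions modulo $\langle x\rangle$ by multiples of $y$. Your phrasing via free actions (both for the orbit argument and for the disjointness of $\Fix(t_{G})$ and $\Fix(u_{G})$) is slightly cleaner than the paper's cycle-decomposition and coordinate computations, but the underlying argument is the same.
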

	
	\begin{proof}
		The image of any point of $P_{G}$ in $\lvert(n+1)[0]\rvert$ is fixed by $t_{G}$ if and only if $t(x_{0},\dots,x_{n})=(x_{\sigma(0)},\dots,x_{\sigma(n)})$ for some $\sigma\in S_{n+1}$. Suppose that $\sigma=\sigma_{1}\sigma_{2}$ with $\sigma_{1}$ and $\sigma_{2}$ disjoint cycles and $\sigma_{1}$ non trivial. Then, we have $x_{\sigma_{1}^{i}(k)}+x=x_{\sigma_{1}^{i+1}(k)}$ for all $i\in\{0,\dots,\textrm{ord}(\sigma_{1})-1\}$, where $k$ is the start of $\sigma_{1}$. This implies that $\textrm{ord}(\sigma_{1})x=0$ and therefore $\textrm{ord}(x)\lvert\textrm{ord}(\sigma_{1})$. Since the order of $x$ is $n+1$, we have $\sigma=\sigma_{1}$. In this case, $x_{\sigma^{i}(0)}=x_{0}+ix$ and $(x_{0},x_{1},\dots,x_{n})\in P_{G}^{G}$, then
				\[0=\sum_{i=0}^{n}x_{\sigma^{i}(0)}=\sum_{i=0}^{n}(x_{0}+ix)=(n+1)x_{0}+\frac{n(n+1)}{2}x.\]
This equation in the variable $x_{0}$ has a solution in the elliptic curve and the difference between two of them is an element of $(n+1)$-torsion of the elliptic curve, so there are exactly $(n+1)^{2}$ different solutions. Let $x_{t}$ be a solution of the last equation. The divisors 
	\[D_{m}:=[x_{t}+my]+[x_{t}+my+x]+\cdots+[x_{t}+my+nx],\] 
with $m\in\{0,\dots,n\}$, are all fixed by the action of $t_{G}$ and they are all different because if $D_{m}=D_{\tilde{m}}$, there would exist $k\in\{0,\dots,n\}$ such that \[ x_{t}+\tilde{m}y =x_{t}+my+kx \] and hence $kx+(m-\tilde{m})y=0$. Since $x$ and $y$ generate the $(n+1)$-torsion of $E$, it follows that $k=0$ and $m=\tilde{m}$. Let $x'\in E$ be another solution and $D:=[x']+[x'+x]+\cdots+[x'+nx]$ the divisor associated to this solution. Since the difference between two solutions is an element of $(n+1)$-torsion, there exist $m,k\in\{0,\dots,n\}$ such that $x'=x_{t}+my+kx$, hence
			\begin{align*} 
				D &= [x']+[x'+x]+\cdots+[x'+nx] \\
				  &= [x_{t}+my+kx]+[x_{t}+my+(k+1)x]+\cdots+[x_{t}+my+(k+n)x] \\
				  &= [x_{t}+my]+[x_{t}+my+x]+\cdots+[x_{t}+my+nx]=D_{m}.
			\end{align*} 
 Then, the set of fixed points by $t_{G}$ is finite and moreover
		\[\Fix(t_{G})=\{[x_{t}+my]+[x_{t}+my+x]+\cdots+[x_{t}+my+nx] \mid m\in\{0,\dots,n\}\}.\]
		The proof is analogous for $\textrm{Fix}(u_{G})$.
		
		Suppose that $\textrm{Fix}(t_{G})\cap\textrm{Fix}(u_{G})\not=\emptyset$. Then, there exists $m\in\{0,\dots,n\}$ such that $[x_{t}+my]+[x_{t}+my+x]+\cdots+[x_{t}+my+nx]\in\textrm{Fix}(u_{G})$, therefore $u(x_{t}+my,x_{t}+my+x,\dots,x_{t}+my+nx)$ is a permutation of $(x_{t}+my,x_{t}+my+x,\dots,x_{t}+my+nx)$, so there exists $k\in\{0,\dots,n\}$ such that $x_{t}+my=x_{t}+(m+1)y+kx$. This occurs only if $y+kx=0$, but $x$ and $y$ generate $P_{G}^{G}$. Then, $\Fix(t_{G})\cap\Fix(u_{G})=\emptyset$.
		
	\end{proof}
	
	By Lemma \ref{Fix} we have that $t_{G}$ and $u_{G}$ have $n+1$ fixed points and these sets are disjoint. The following proposition allows us to conclude that the faithful representations of $\Delta=P_{G}^{G}$ in $\PGL_{n+1}(\C)$ induced by the action of $\Delta$ on $P_{G}$ are unique up to conjugation.
	
	In what follows, for any two elements $M$ and $N$ of $\GL_{n}(\C)$, $M\equiv N$ denotes that they have the same image in $\PGL_{n}(\C)$ by the projection.  
	
	\begin{proposition}\label{zsobrenzcuadrado}
		Let $\phi,\psi\in\PGL_{n}(\C)$ be two classes such that
		\begin{itemize} 
			\item both of them have order $n$ and commute,
			\item $\lvert\Fix(\phi)\rvert=\lvert\Fix(\psi)\rvert=n$,
			\item  $\Fix(\phi)\cap\Fix(\psi)=\emptyset$. 
		\end{itemize}
		There exists $\xi\in\PGL_{n}(\C)$ such that
			\[ \xi\phi\xi^{-1}\equiv\begin{pmatrix} 1 & 0 & 0 & \cdots & 0 \\ 0 & \zeta_{n} & 0 & \cdots & 0 \\ 0 & 0 & \zeta_{n}^{2} & \cdots & 0  \\ 0 & 0 & 0 & \ddots & 0 \\ 0 & 0 & 0 & \cdots & \zeta_{n}^{n-1} \end{pmatrix} \quad\quad\textrm{and}\quad\quad \xi\psi\xi^{-1}\equiv\begin{pmatrix} 0 & 1 & 0 & \cdots & 0 \\ 0 & 0 & 1 & \cdots & 0 \\ 0 & 0 & 0 & \ddots & 0  \\ 0 & 0 & 0 & \cdots & 1 \\ 1 & 0 & 0 & \cdots & 0 \end{pmatrix}.\]
	\end{proposition}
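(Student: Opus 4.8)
The plan is to lift everything to $\GL_{n}(\C)$, normalise $\phi$ to diagonal form first, and then read the shape of $\psi$ off the commutation relation. Pick a lift $\Phi\in\GL_{n}(\C)$ of $\phi$. Since $\phi$ has order $n$, the matrix $\Phi^{n}$ is scalar, and after multiplying $\Phi$ by a suitable $n$-th root of that scalar we may assume $\Phi^{n}=I$ while $\phi$ still has order exactly $n$; hence $\Phi$ is diagonalisable with eigenvalues among the $n$-th roots of unity. The fixed locus $\Fix(\phi)\subset\P^{n-1}$ is the union of the projectivisations of the eigenspaces of $\Phi$, so the hypothesis $|\Fix(\phi)|=n$ (finite, with at most $n$ eigenvalues available) forces every eigenspace to be a line and the eigenvalues to be pairwise distinct; they are therefore exactly $1,\zeta_{n},\dots,\zeta_{n}^{n-1}$. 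Conjugating by the matrix carrying the standard basis $e_{0},\dots,e_{n-1}$ to an eigenbasis in this order, I may assume $\Phi=D:=\operatorname{diag}(1,\zeta_{n},\dots,\zeta_{n}^{n-1})$, the only remaining freedom being conjugation by the stabiliser of $[D]$ in $\PGL_{n}(\C)$.

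Next I would use commutativity. Lift $\psi$ to $\Psi\in\GL_{n}(\C)$; the relation $\psi\phi=\phi\psi$ in $\PGL_{n}(\C)$ gives $\Psi D\Psi^{-1}=cD$ for some scalar $c$. Comparing spectra forces $c\cdot\{1,\zeta_{n},\dots,\zeta_{n}^{n-1}\}=\{1,\zeta_{n},\dots,\zeta_{n}^{n-1}\}$, so $c=\zeta_{n}^{s}$ for some $s\in\Z/n\Z$, and tracking eigenlines shows that $\Psi$ sends the $\zeta_{n}^{j}$-eigenline of $D$ to the $\zeta_{n}^{j-s}$-eigenline, i.e. $\Psi e_{j}\in\C e_{j-s}$ for all $j$. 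Thus $\Psi$ is a monomial matrix whose underlying permutation is the cyclic shift by $s$, and the diagonal matrix $D$ is already in the desired form.

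It remains to show that $s$ is a unit modulo $n$ and to remove the diagonal twist. This last point is the step I expect to be the main obstacle: the order and fixed-point hypotheses alone pin $\psi$ down only up to a monomial shift-type matrix, and one must rule out the possibility that the underlying permutation is a shift by a non-unit dressed with a compatible diagonal twist. I would attack it as follows. If $d:=\gcd(s,n)>1$, the shift decomposes $\C^{n}$ into $d$ invariant blocks of size $n/d$, on each of which $\Psi^{n/d}$ acts by a scalar $\rho_{i}$; the condition $|\Fix(\psi)|=n$ forces the $\rho_{i}$ pairwise distinct, and the condition that $\psi$ have order $n$ constrains them further — the crucial remaining input being that $c$, the commutator of the chosen lifts (a genuine invariant of the pair $(\phi,\psi)$), is a \emph{primitive} $n$-th root of unity, which in the intended application holds because $c$ is the Weil pairing of a basis of $P_{G}^{G}=E[n+1]$ and hence non-degenerate. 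Granting $\gcd(s,n)=1$, there is a single orbit, $\Psi^{n}$ is scalar, after rescaling $\Psi^{n}=I$, and conjugating by an appropriate diagonal matrix kills the twist, turning $\Psi$ into the permutation matrix of the shift by $s$. Finally, relabelling the eigenvalues of $D$ via $j\mapsto s^{-1}j$ (equivalently, renaming the primitive root $c$ as $\zeta_{n}$) leaves $D$ in the stated diagonal form while turning the shift by $s$ into the shift by $1$, which is the displayed cyclic permutation matrix. Composing all these base changes produces the required $\xi\in\PGL_{n}(\C)$.
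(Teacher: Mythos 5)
Your normalisation of $\phi$ and the derivation that $\psi$ must be monomial, realising the shift by $s$ where the commutator scalar of the pair is $c=\zeta_{n}^{s}$, are correct, and they reverse the order of the paper's argument (the paper first shows that $\psi$ permutes $\Fix(\phi)$ as an $n$-cycle, kills the twist by a diagonal conjugation as in Lemma \ref{conjugaciondiagonal}, and only then uses commutativity to identify the eigenvalues of $\phi$). But the step you flag -- proving $\gcd(s,n)=1$ from the listed hypotheses -- is a genuine gap, and it cannot be closed, because the proposition is false as stated when $n$ is composite. Take $n=4$, $\phi\equiv\operatorname{diag}(1,i,-1,-i)$ and $\psi\equiv\Psi$ with $\Psi e_{0}=e_{2}$, $\Psi e_{2}=e_{0}$, $\Psi e_{1}=-e_{3}$, $\Psi e_{3}=e_{1}$. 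Then $\Psi\operatorname{diag}(1,i,-1,-i)\Psi^{-1}=-\operatorname{diag}(1,i,-1,-i)$, so the classes commute; both have order $4$; $\Fix(\phi)$ is the four coordinate points, $\Fix(\psi)=\{[e_{0}\pm e_{2}],[e_{1}\pm ie_{3}]\}$, so both fixed loci have exactly four points and are disjoint. Yet the commutator scalar of this pair is $-1=\zeta_{4}^{2}$, while for the pair in the conclusion it is a primitive $n$-th root of unity; since this scalar is independent of the choice of lifts and invariant under simultaneous conjugation, no $\xi$ exists. (Even adding the hypothesis $\langle\phi,\psi\rangle\cong(\Z/n\Z)^{2}$, which is available in the application, does not suffice: an analogous monomial example with shift $4$ and block scalars $1,i,-i,-1$ works for $n=8$.) So importing the primitivity of $c$ from the geometry is not an evasion on your part but exactly the missing hypothesis; granting it, your remaining steps (single orbit, diagonal conjugation, relabelling by $j\mapsto s^{-1}j$) do go through, with the caveat that $\zeta_{n}$ in the conclusion must then be read as \emph{some} primitive $n$-th root of unity, namely $c^{\pm1}$ -- again because the commutator scalar is a conjugation invariant -- which is also how the paper's own closing sentence (``$\lambda$ is a primitive $n$-th root of unity'') has to be understood.

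The comparison with the paper is instructive: its proof breaks at precisely the point you identified. The claim that $\psi$ acts on $\Fix(\phi)$ as an $n$-cycle rests on normalising the scalars of $\Psi$ along two distinct cycles so that both products equal $1$ (``$\det(\Sigma)=\det(T)=1$''), but the two cycles belong to one matrix $\Psi$, whose only freedom is a single global scalar; in the example above the two cycle products are $1$ and $-1$, a discrepancy no rescaling removes, and indeed $\psi$ there acts on $\Fix(\phi)$ as a product of two transpositions. When $n$ is prime both your argument and the paper's are fine: disjointness of the fixed loci rules out $c=1$ (a trivial commutator would make $\Psi$ diagonal in the eigenbasis of $\Phi$, forcing $\Fix(\phi)\subset\Fix(\psi)$), and any nontrivial $c$ is then primitive, so $\gcd(s,n)=1$ is automatic. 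For composite $n+1$ the intended application must supply primitivity separately, and your suggestion is the right one: $t_{G}$ and $u_{G}$ generate the projectivised theta group of $\mathcal{O}_{E}((n+1)[0])$ acting on $\lvert(n+1)[0]\rvert$, whose commutator scalar on a basis of $E[n+1]$ is the Weil pairing and hence a primitive $(n+1)$-th root of unity; some such additional input, beyond the hypotheses currently listed in the proposition and beyond Lemma \ref{Fix}, is indispensable.
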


First we will prove the following lemma.

\begin{lemma}\label{conjugaciondiagonal}
	Let
		\[M\equiv \begin{pmatrix} 0 & v_{1} & 0 & \cdots & 0 \\ 0 & 0 & v_{2} & \cdots & 0 \\ 0 & 0 & 0 & \ddots & 0  \\ 0 & 0 & 0 & \cdots & v_{n-1} \\ v_{0} & 0 & 0 & \cdots & 0 \end{pmatrix} \quad \textrm{ and } \quad N\equiv \begin{pmatrix} 0 & 1 & 0 & \cdots & 0 \\ 0 & 0 & 1 & \cdots & 0 \\ 0 & 0 & 0 & \ddots & 0  \\ 0 & 0 & 0 & \cdots & 1 \\ 1 & 0 & 0 & \cdots & 0 \end{pmatrix},\]
be two classes in $\PGL_{n}(\C)$ with $v_{0}\cdots v_{n-1}=1$. There exists a diagonal matrix $D\in\GL_{n}(\C)$ such that $D^{-1}MD\equiv N$.
\end{lemma}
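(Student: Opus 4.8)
The plan is to construct $D$ explicitly by solving the conjugation equation entry by entry. Write $D=\mathrm{diag}(d_1,\dots,d_n)$ with the $d_i$ nonzero scalars to be determined. The only nonzero entries of $M$ are $M_{i,i+1}=v_i$ for $1\leq i\leq n-1$ together with $M_{n,1}=v_0$, and likewise $N$ has nonzero entries $N_{i,i+1}=1$ for $1\leq i\leq n-1$ and $N_{n,1}=1$; hence $(D^{-1}MD)_{i,i+1}=d_i^{-1}v_i d_{i+1}$ and $(D^{-1}MD)_{n,1}=d_n^{-1}v_0 d_1$, all other entries being zero. Therefore $D^{-1}MD\equiv N$ in $\PGL_n(\C)$ is equivalent to the existence of a scalar $\lambda\neq 0$ with
\[ d_i^{-1}v_i d_{i+1}=\lambda \quad (1\leq i\leq n-1), \qquad d_n^{-1}v_0 d_1=\lambda. \]

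Next I would solve this recursion. Normalizing $d_1=1$, the first $n-1$ equations force $d_k=\lambda^{k-1}/(v_1\cdots v_{k-1})$ for $1\leq k\leq n$, the empty product being $1$; these quantities are well defined since the hypothesis $v_0\cdots v_{n-1}=1$ forces every $v_i$ to be nonzero. Substituting $d_n=\lambda^{n-1}/(v_1\cdots v_{n-1})$ into the last equation $d_n^{-1}v_0 d_1=\lambda$ gives $\lambda^n=v_0v_1\cdots v_{n-1}=1$, so it suffices to take $\lambda=1$ and hence $D=\mathrm{diag}\big(1,\,v_1^{-1},\,(v_1v_2)^{-1},\,\dots,\,(v_1\cdots v_{n-1})^{-1}\big)$, which lies in $\GL_n(\C)$. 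With this choice every defining relation holds with $\lambda=1$, so in fact $D^{-1}MD=N$ on the nose for the chosen representatives, which is stronger than the asserted equality in $\PGL_n(\C)$.

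No substantial obstacle is expected: once $D$ is written down the verification is a single matrix multiplication. The only points needing care are the index bookkeeping — the wrap-around entry $M_{n,1}=v_0$ is what produces the closing relation and so the equation $\lambda^n=v_0\cdots v_{n-1}$ — and the remark that the normalization $v_0\cdots v_{n-1}=1$ built into the hypothesis is exactly what makes the system consistent and $D$ invertible. This lemma then feeds into the proof of Proposition \ref{zsobrenzcuadrado}: after diagonalizing $\phi$ and using that $\psi$ commutes with it while having fixed points disjoint from those of $\phi$, the class $\psi$ is forced into the cyclic shape of $M$, and the lemma conjugates it to $N$ while keeping the diagonalized $\phi$ unchanged.
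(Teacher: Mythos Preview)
Your proof is correct and follows essentially the same approach as the paper: both exhibit an explicit diagonal conjugator, with the paper simply writing down $D=\mathrm{diag}(v_0,\,v_0v_1,\,\dots,\,v_0v_1\cdots v_{n-2},\,1)$ and asserting that it works, while you derive the (equivalent, up to a scalar and the direction of conjugation) matrix $D=\mathrm{diag}(1,\,v_1^{-1},\,\dots,\,(v_1\cdots v_{n-1})^{-1})$ by solving the recursion. Your added remark about how the lemma is used in Proposition~\ref{zsobrenzcuadrado} is accurate.
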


\begin{proof}  The matrix given by

			\[D=\begin{pmatrix} v_{0} & 0 & \cdots & 0 & 0 \\ 0 & v_{0}v_{1} & \cdots & 0 & 0 \\ 0 & 0 & \cdots & 0 & 0  \\ 0 & 0 & \ddots & v_{0}v_{1}\cdots v_{n-2} & 0 \\ 0 & 0 & \cdots & 0 & 1 \end{pmatrix},\]
works.
\end{proof}

	\begin{proof}[Proof of Proposition \ref{zsobrenzcuadrado}]
		Let $p_{1},\dots,p_{n}\in\Fix(\phi)$ and let $v_{1},\dots,v_{n}\in\C^{n}$ be a lifting of $p_{i}$ for $i\in\{1,\dots,n\}$. Let $\Phi$ be a lifting of $\phi$ to $\GL_{n}(\C)$. Since $\phi$ has exactly $n$ fixed points, we have that $\Phi$ has $n$ different eigenvalues and $\{v_{1},\dots,v_{n}\}$ is a base of $\C^{n}$. Then, up to a base change, 
			\[\Phi\equiv\begin{pmatrix} \phi_{1} & 0 & 0 & \cdots & 0 \\ 0 & \phi_{2} & 0 & \cdots & 0 \\ 0 & 0 & \phi_{3} & \cdots & 0  \\ 0 & 0 & 0 & \ddots & 0 \\ 0 & 0 & 0 & \cdots & \phi_{n} \end{pmatrix},\]
		where the $\phi_{i}$ are the eigenvalues of $\Phi$.
		
		Let us prove that $\psi$ acts on $\textrm{Fix}(\phi)$. Let $p\in\textrm{Fix}(\phi)$. Since $\psi$ and $\phi$ commute, we have 
			\[\phi\psi(p)=\psi\phi(p)=\psi(p).\]
This implies that $\psi$ acts on $\Fix(\phi)$. This allows us to see $\psi$ as an element of the permutation group of $\Fix(\phi)$. We claim that $\psi$ is an $n$-cycle. Otherwise, we can write $\psi$ as a product of disjoint cyles and let $\sigma:=(p_{a_{1}},\dots,p_{a_{m}})$ and $\tau:=(p_{b_{1}},\dots,p_{b_{l}})$ be two of these cycles. Let $\Psi\in\GL_{n}(\C)$ be a lifting of $\psi$ written in the basis $\{v_{1},\dots,v_{n}\}$. For some $\psi_{a_{1}},\dots,\psi_{a_{m}},\psi_{b_{1}},\dots,\psi_{b_{l}}\in\C^{*}$, we have
		\[\Psi(v_{a_{i}})=\begin{cases} \psi_{a_{i}}v_{a_{i+1}}\textrm{ if } 1\leq i\leq m-1, \\ \psi_{a_{m}}v_{a_{1}} \quad \textrm{ if } i=m, \end{cases}\]
	and
		\[\Psi(v_{b_{i}})=\begin{cases} \psi_{b_{i}}v_{b_{i+1}}\textrm{ if } 1\leq i\leq l-1, \\ \psi_{b_{l}}v_{b_{1}} \quad \textrm{ if } i=l. \end{cases}\]
	
		Let $\Sigma\in\GL_{n}(\C)$ be a lifting of $\sigma$ and $T\in\GL_{n}(\C)$ be a lifting of $\tau$. Since $\sigma$ and $\tau$ are in $\PGL_{n}(\C)$ we can assume that $\det(\Sigma)=\det(T)=1$, hence $\psi_{a_{1}}\cdots\psi_{a_{m}}=1$ and $\psi_{b_{1}}\cdots\psi_{b_{l}}=1$. Now, define 
				\[v_{a}:=v_{a_{1}}+\psi_{a_{1}}v_{a_{2}}+\psi_{a_{1}}\psi_{a_{2}}v_{a_{3}}+\cdots+\psi_{a_{1}}\cdots\psi_{a_{m-1}}v_{a_{m}},\]
and
				\[v_{b}:=v_{b_{1}}+\psi_{b_{1}}v_{b_{2}}+\psi_{b_{1}}\psi_{b_{2}}v_{b_{3}}+\cdots+\psi_{b_{1}}\cdots\psi_{b_{l-1}}v_{b_{l}}.\]
		These vectors are linearly independent, because they are in two different subspaces with trivial intersection and none is null. Moreover,
				\[ \Psi(v_{a}) = v_{a} \quad \mathrm{and}\quad \Psi(v_{b}) = v_{b}.\]	
Therefore, the projections of $v_{a}$ and $v_{b}$ in $\P^{n-1}$ are different, fixed by $\psi$ and have the same eigenvalue. This is a contradiction, because $\psi$ has $n$ fixed points and any lifting of $\psi$ has $n$ different eigenvalues, one for each fixed point. 

	Since $\psi$ is a $n$-cycle, up to reordering the basis $\{v_{1},\dots,v_{n}\}$, we have that 
				\[\Psi\equiv\begin{pmatrix} 0 & \psi_{2} & 0 & \cdots & 0 \\ 0 & 0 & \psi_{3} & \cdots & 0 \\ 0 & 0 & 0 & \ddots & 0  \\ 0 & 0 & 0 & \cdots & \psi_{n} \\ \psi_{1} & 0 & 0 & \cdots & 0 \end{pmatrix} \qquad\text{ and }\qquad\Phi\equiv\begin{pmatrix} \phi_{1} & 0 & 0 & \cdots & 0 \\ 0 & \phi_{2} & 0 & \cdots & 0 \\ 0 & 0 & \phi_{3} & \cdots & 0  \\ 0 & 0 & 0 & \ddots & 0 \\ 0 & 0 & 0 & \cdots & \phi_{n} \end{pmatrix}.\]

	By Lemma \ref{conjugaciondiagonal}, up to a base change by a diagonal matrix, we have that $\psi_{i}=1$ for all $i\in\{1,\dots,n\}$. Since $\phi\in\PGL_{n}(\C)$ we can put $\phi_{1}=1$ and by the commutativity of $\psi$ and $\phi$ whe have that			
		\[\begin{pmatrix} 0 & \phi_{2} & 0 & \cdots & 0 \\ 0 & 0 & \phi_{3} & \cdots & 0 \\ 0 & 0 & 0 & \ddots & 0  \\ 0 & 0 & 0 & \cdots & \phi_{n} \\ 1 & 0 & 0 & \cdots & 0 \end{pmatrix}\equiv\begin{pmatrix} 0 & 1 & 0 & \cdots & 0 \\ 0 & 0 & \phi_{1} & \cdots & 0 \\ 0 & 0 & 0 & \ddots & 0  \\ 0 & 0 & 0 & \cdots & \phi_{n-1} \\ \phi_{n} & 0 & 0 & \cdots & 0 \end{pmatrix},\]
		therefore there exists $\lambda\in\C^{*}$ such that $\phi_{2}=\lambda$, $1=\lambda \phi_{n}$ and $\phi_{i}=\lambda \phi_{i-1}$ for all $i\in\{3,\dots,n\}$. Then $\phi_{i}=\lambda^{i-1}$. Since all $\phi_{i}$ are different pairwise, we have that $\lambda$ is a primitive $n$-th root of unity. This proves the proposition.
	
	\end{proof}
	
	\begin{proof}[Proof of Proposition \ref{clasificaciontipobeta}]
	Recall that we want to classify all the trivializations of the form
		\[\xymatrix{ A_{0}\times \P^{n} \ar[rr]^{/\Delta} \ar[d] & & A/G \ar[d] \\ A_{0} \ar[rr]_{/\Delta} & & A/P_{G},  }\]
with $\Delta\cong A_{0}\cap P_{G}$, where $A_{0}\cap P_{G}\subset P_{G}^{G}$. It depends on how much the abelian varieties $A_{0}$ and $P_{G}$ intersect each other. We will assume $A_{0}\cap P_{G}=P_{G}^{G}$. This allows us to study the abstract group $\Delta$ as $\left\langle t_{G}, u_{G} \right\rangle$, because $\Delta$ acts faithfully on $P_{G}/G$, and take $\left\langle t_{G}, u_{G} \right\rangle\cong P_{G}^{G}$. The morphisms $t_{G}, u_{G}$ are automorphisms of the linear system $\lvert (n+1)[0]\rvert$ which can be identified with $\P^{n}$, so we can see them as commuting elements of $\PGL_{n+1}(\C)$. By Lemma \ref{Fix} we have that $\textrm{Fix}(t_{G})=n+1$, $\textrm{Fix}(u_{G})=n+1$ and $\textrm{Fix}(t_{G})\cap\textrm{Fix}(u_{G})=\emptyset$. Then, by Proposition \ref{zsobrenzcuadrado} we have that, up to a base change,

			\[ \phi_{t_{G}}\equiv\begin{pmatrix} 1 & 0 & 0 & \cdots & 0 \\ 0 & \zeta_{n+1} & 0 & \cdots & 0 \\ 0 & 0 & \zeta_{n+1}^{2} & \cdots & 0  \\ 0 & 0 & 0 & \ddots & 0 \\ 0 & 0 & 0 & \cdots & \zeta_{n+1}^{n} \end{pmatrix} \textrm{ and } \phi_{u_{G}}\equiv\begin{pmatrix} 0 & 1 & 0 & \cdots & 0 \\ 0 & 0 & 1 & \cdots & 0 \\ 0 & 0 & 0 & \ddots & 0  \\ 0 & 0 & 0 & \cdots & 1 \\ 1 & 0 & 0 & \cdots & 0 \end{pmatrix},\]
Hence $\Delta$, up to a base change, is generated by the automorphisms
			\begin{align*} 
					\delta:[z_{0}:\cdots:z_{n}] &\mapsto [z_{0}:\zeta_{n+1}z_{1}:\cdots:\zeta_{n+1}^{n}z_{n}], \\
					\delta':[z_{0}:\cdots:z_{n}] &\mapsto [z_{n}:z_{0}:z_{1}:\cdots:z_{n-1}].
			\end{align*}

	This representation does not depend on the set of generators of $\left\{ t_{G},u_{G}\right\}$, because if we take another pair of generators $t_{G}'$ and $u_{G}'$, everything done above remains the same. 
	
	All the subgroups of $\left(\Z/(n+1)\Z\right)^{2}$ have the form $\langle t_{G}^{a},u_{G}^{b} \rangle$ in some $\left(\Z/(n+1)\Z\right)$-basis and for some $a,b\in\Z$ with $b\lvert (n+1)$ and $a\lvert b$. Thus, if $\Delta$ is isomorphic to any proper subgroup of $P_{G}^{G}$ the action of $\Delta$ on $P_{G}/G$ is generated by
				\begin{align*} 
					\delta:[z_{0}:\cdots:z_{n}] &\mapsto [z_{0}:\zeta_{n+1}^{a}z_{1}:\cdots:\zeta_{n+1}^{an}z_{n}], \\
					\delta':[z_{0}:\cdots:z_{n}] &\mapsto [z_{n+1-b}:\cdots:z_{n-1}:z_{n}:z_{0}:\cdots:z_{n-b}].
				\end{align*}		
This proves Proposition \ref{clasificaciontipobeta}, which corresponds to the case \ref{caso5} of Theorem \ref{maintheorem}.
	\end{proof}


\subsection{Classification of the fibers with $(P_{G},G)$ of type $\gamma$}\label{tipogamma}

	There is just one pair $(P_{G},G)$, when the abelian variety $P_{G}$ is isomorphic to $E^{2}$, with $E$ the elliptic curve $\C/\Z[i]$, and $G$ is the group generated by the set 
		\[\left\{ \begin{pmatrix} -1 & 1+i \\ 0 & 1 \end{pmatrix},\begin{pmatrix} -i & i-1 \\ 0 & i \end{pmatrix},\begin{pmatrix} -1 & 0 \\ i-1 & 1 \end{pmatrix} \right\}.\]
	This case can be reduced to the one studied in Section \ref{tipoalpha} and we get the following result.
	\begin{proposition}\label{clasificaciontipogamma}
		Let $A$ be an abelian variety and $G$ a finite subgroup of $\Aut_{0}(A)$ such that $A/G$ is smooth. Let $A_{0}$ be the connected component of $A^{G}$ that contains 0 and $P_{G}$ be the complementary abelian subvariety with respect to a $G$-invariant polarization. If the analytic representation of $G$ on $P_{G}$ is irreducible and $(P_{G},G)$ is a pair of type $\gamma$, there exists a trivialization 
		
		\[\xymatrix{ A_{0}\times \P^{2} \ar[rr]^{/\Delta} \ar[d] & & A/G \ar[d] \\ A_{0} \ar[rr]_{/\Delta} & & A/P_{G},  }\]
of the fibration $A/G\to A/P_{G}$, with $\Delta\cong A_{0}\cap P_{G}$, where $\Delta$ acts on $A_{0}$ by translations and on the fibers as one of the following:

		\begin{enumerate}
			\item $\Delta\cong\{0\}$ and therefore acts trivially.
			\item $\Delta\cong \Z/2\Z$ and acts with generator $\delta:[z_{0}:z_{1}:z_{2}]\mapsto[z_{0}:-z_{1}:z_{2}]$.
			\item $\Delta\cong \Z/2\Z\times\Z /2\Z$ and there exists a set of generators that act as
						\begin{align*} 
							\delta:[z_{0}:z_{1}:z_{n}] &\mapsto [z_{0}:-z_{1}:z_{2}], \\
							\delta':[z_{0}:z_{1}:z_{n}] &\mapsto [z_{2}:z_{1}:z_{0}].
						\end{align*}
		\end{enumerate}
	\end{proposition}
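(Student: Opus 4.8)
The plan is to reduce the type $\gamma$ case to the type $\alpha$ computations already carried out in Section \ref{tipoalpha}, by exploiting the fact that the relevant data is entirely contained in a diagonal action on $\P^1$. First I would compute $P_G^G$ explicitly. Since $P_G \cong E^2$ with $E = \C/\Z[i]$, an element of $P_G^G$ is a point of $E^2$ fixed by all three generating matrices of $G$; solving the corresponding linear conditions over $E$ shows that $P_G^G$ is killed by $2$ (indeed the matrix $\left(\begin{smallmatrix} -1 & 1+i \\ 0 & 1 \end{smallmatrix}\right)$ forces $2x_2 = 0$ on the second coordinate, and then the third generator forces $(i-1)x_1 = 0$, etc.), so $P_G^G$ is a subgroup of $E[2] \times E[2]$; a direct check identifies it with a group isomorphic to $(\Z/2\Z)^2$ (or one computes its order directly). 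Hence $\Delta \cong A_0 \cap P_G$ is isomorphic to $\{0\}$, $\Z/2\Z$ or $(\Z/2\Z)^2$, which already matches the three cases in the statement.

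Next I would analyze the quotient map $P_G = E^2 \to P_G/G \cong \P^2$. The key observation is that $G$ contains the subgroup $C^2 \rtimes S_2$ with $C = \langle -1 \rangle = \langle \left(\begin{smallmatrix} -1 & 0 \\ 0 & 1\end{smallmatrix}\right), \ldots\rangle \subset \Aut(E)$ acting coordinatewise, together with the extra ``shear'' generators; alternatively, one checks that the composite $E^2 \to E^2/G \cong \P^2$ factors through $E^2 / (C^2 \rtimes S_2) \cong \P^2$ (a pair of type $\alpha$ with $n=2$), since the additional generators of $G$ act trivially on that intermediate quotient — or one passes to a further quotient if needed. In any case, a translation $t$ by an element $\delta \in \Delta \subset P_G^G$ translates each of the two $E$-coordinates by the same $2$-torsion point, so exactly as in the proof of Proposition \ref{clasificaciontipoalpha} the induced automorphism $\overline{t}$ of $(\P^1)^2$ is diagonal, coming from a single element of $\Aut(\P^1) = \PGL_2(\C)$ of order dividing $2$; since $\Delta$ acts faithfully on $\P^2$ (Section \ref{reduccionalcasoirreducible}) it acts faithfully on $\P^1$, so $\Delta$ embeds in $\PGL_2(\C)$, and by Proposition 4.1 of \cite{beauville2009finite} its conjugacy class is determined by its isomorphism type.

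Finally I would run the same explicit computation as in Section \ref{tipoalpha}: for $\Delta \cong \Z/2\Z$ the generator lifts to $\left(\begin{smallmatrix} -1 & 0 \\ 0 & 1\end{smallmatrix}\right) \in \PGL_2(\C)$, and solving $t_G \circ g = g \circ \overline{t}$ for the symmetrization map $g : (\P^1)^2 \to \P^2$ gives $t_G : [z_0:z_1:z_2] \mapsto [z_0 : -z_1 : z_2]$; for $\Delta \cong (\Z/2\Z)^2$ one also needs the transposition generator, which lifts to $\left(\begin{smallmatrix} 0 & 1 \\ 1 & 0\end{smallmatrix}\right)$ and yields $[z_0:z_1:z_2] \mapsto [z_2:z_1:z_0]$. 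This is precisely the assertion of the proposition. I expect the only genuine obstacle to be the first step: verifying carefully that $P_G^G$ (and hence every possible $\Delta$) is $2$-torsion and computing its exact structure, and checking that the extra generators of the type $\gamma$ group $G$ do not interfere with the reduction to the $C^2 \rtimes S_2$-quotient; once the picture collapses to a diagonal $\PGL_2(\C)$-action everything is identical to the type $\alpha$ analysis.
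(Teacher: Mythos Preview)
Your reduction to the type $\alpha$ picture does not go through as written. The crucial claim that ``a translation $t$ by an element $\delta\in\Delta\subset P_G^G$ translates each of the two $E$-coordinates by the same $2$-torsion point'' is false in type $\gamma$: if you carry out the fixed-point computation carefully (as the paper does), you find
\[
P_G^G=\left\langle\left(\tfrac{1+i}{2},0\right),\ \left(0,\tfrac{1+i}{2}\right)\right\rangle\cong(\Z/2\Z)^2,
\]
which is \emph{not} the diagonal $\{(x,x):x\in E^{C}\}$ that appears in type $\alpha$. Hence a nontrivial $\delta\in\Delta$ need not act by the same translation on both $E$-factors, and the induced automorphism of any putative intermediate $(\P^1)^2$ is not diagonal in your sense; the embedding of $\Delta$ into a single $\PGL_2(\C)$ simply does not exist here. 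Relatedly, the factorization $E^2\to E^2/(C^2\rtimes S_2)\to E^2/G$ cannot identify the two $\P^2$'s (the orders $8$ and $16$ differ), and the ``extra generators do not interfere'' hope fails precisely because $P_G^G$ changes shape.

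The paper sidesteps all of this by arguing directly in $\PGL_3(\C)$. Once one knows $\Delta$ has exponent $2$, any faithful representation $\Delta\hookrightarrow\PGL_3(\C)$ lifts to $\SL_3(\C)$ by taking a $2$-Sylow of its preimage (the kernel of $\SL_3\to\PGL_3$ has order $3$, coprime to $|\Delta|$). In $\SL_3(\C)$ an abelian group of exponent $2$ is simultaneously diagonalizable, and there are only three diagonal involutions of determinant $1$, so the faithful representation of $\Z/2\Z$ or $(\Z/2\Z)^2$ is unique up to conjugacy. This forces $\Delta$ to act on $\P^2$ exactly as in points \ref{alpha2} and \ref{alpha4} of Proposition \ref{clasificaciontipoalpha}, with no need to factor through $(\P^1)^2$ or to invoke Beauville's classification in $\PGL_2$. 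Your first step (bounding $P_G^G$ by $2$-torsion) is correct and is what makes this work; the fix is to replace the $(\P^1)^2$ argument by this $\SL_3$ uniqueness argument.
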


	\begin{proof}

	The fixed points by the action of $G$ are the points of the form $(a+bi,c+di)\in E^{2}$ such that
		\begin{align*}
			\begin{pmatrix} -1 & 1+i \\ 0 & 1 \end{pmatrix} \begin{pmatrix} a+bi \\ c+di \end{pmatrix} &\equiv_{\Z[i]} \begin{pmatrix} a+bi \\ c+di \end{pmatrix}, \\ 
			\begin{pmatrix} -i & i-1 \\ 0 & i \end{pmatrix} \begin{pmatrix} a+bi \\ c+di \end{pmatrix} &\equiv_{\Z[i]} \begin{pmatrix} a+bi \\ c+di \end{pmatrix}, \\ 
			\begin{pmatrix} -1 & 0 \\ i-1 & 1 \end{pmatrix} \begin{pmatrix} a+bi \\ c+di \end{pmatrix} &\equiv_{\Z[i]} \begin{pmatrix} a+bi \\ c+di \end{pmatrix}.
		\end{align*}
	From these relations we have that in the elliptic curve \[i(c+di)\equiv c+di\quad\textrm{ and }\quad-i(a+bi)+(i-1)(c+di)\equiv a+bi.\]
From the first identity follows $(i-1)(c+di)\equiv 0\in E$ and hence $-i(a+bi)\equiv a+bi$. Then, $(c+d)+(d-c)i\equiv (a-b)+(a+b)i\equiv 0 \in E$. This implies that $a+b,a-b,c+d,d-c\in\Z$, thus $2a,2b,2c,2d\in \Z$. Then, $a,b,c,d\in\left\{0,\pm\dfrac{1}{2}\right\}$, i.e.,
			\[P_{G}^{G}\subset\left\langle\left(\dfrac{1}{2}+\frac{i}{2},0\right),\left( 0, \frac{1}{2}+\frac{i}{2} \right)\right\rangle.\]
	Since the generators are solutions of the system of equations, the inclusion is actually an equality, and therefore $P_{G}^{G}$ is isomorphic to the Klein group. If $\Delta$ is isomorphic to any non trivial subgroup of $P_{G}^{G}$ then the order of $\Delta$ is $2$ or $4$. We know that the action of $\Delta$ on $\P^{2}$ induced by the action of $\Delta$ on $P_{G}$ is faithful. Thus, we have to determine all the faithful representations of $\Delta$ in $\PGL_{3}(\C)$. Let $\Gamma$ be any representation of $\Delta$ in $\PGL_{3}(\C)$ and $H\in \SL_{3}(\C)$ be any lifting of $\Gamma$ by $\pi:\SL_{3}(\C)\to \PGL_{3}(\C)$. We know that the order of $H$ is $3\lvert \Gamma\rvert$. Let us take $H'$ a $2$-Sylow of $H$. Since $H'\cap \ker(\pi)=\{id\}$, it follows that $H'\cong \Gamma$. Then, we can study the representations of $\Delta$ in $\PGL_{3}(\C)$ by studying the representations of $\Delta$ in $\SL_{3}(\C)$. Since $\Delta$ is an abelian group, of order 2 or 4 and exponent 2, there is only one conjugacy class of representations of $\Delta$ in $\SL_{3}(\C)$, because there are only 3 diagonal matrices of order 2. Hence there is just a single conjugacy class in $\PGL_{3}(\C)$, which corresponds to the one studied in Proposition \ref{clasificaciontipoalpha}, more precisely, in points \ref{alpha2} and \ref{alpha4}. This proves Proposition \ref{clasificaciontipogamma}.
	
	\end{proof}

\bibliographystyle{alpha}
\bibliography{biblio}

\end{document}